\newtheorem{theorem}{Theorem}
\newtheorem{lemma}{Lemma}
\newtheorem{proposition}{Proposition}
\newtheorem{definition}{Definition}
\newtheorem{conjecture}{Conjecture}
\newtheorem{remark}{Remark}
\newtheorem{thm}{Theorem}[section]
\newtheorem{defn}[thm]{Definition}
\newcommand{\eps}{\epsilon}
\newcommand{\lam}{\lambda}
\newcommand{\ind}{\mathds{1}}
\newcommand{\be}{\begin{enumerate}}
\newcommand{\bi}{\begin{itemize}}
\newcommand{\ee}{\end{enumerate}}
\newcommand{\ei}{\end{itemize}}
\renewcommand{\P}{\mathbb{P}}
\newcommand{\sgn}{\mathrm{sgn}}
\newtheorem{example}[theorem]{Example}
\newcommand{\bexamp}{\begin{example}  \hrule \vspace{.1in} }
\newcommand{\eexamp}{\end{example}  \hrule  \vspace{0.1in} }
\renewcommand{\hat}{\widehat}
\renewcommand{\tilde}{\widetilde}
\newcommand{\reals}{{\mathbb{R}}}
\newcommand{\eexp}{{\rm e}}
\newcommand{\zero}{\mathbf 0}
\newcommand{\identityf}[1]{\mathbf 1_{\{#1\}}}
\newcommand{\calF}{{\cal F}}
\newcommand{\calH}{{\cal H}}
\newcommand{\calL}{{\cal L}}
\newcommand{\calT}{{\cal T}}
\renewcommand{\P}{\mathbb{P}}
\DeclareMathOperator{\Tr}{Tr}
\newcommand{\post}[2]{\begin{center} \includegraphics[width=#2cm]{#1} \end{center} }
\begin{document}

\title{On non-unique solutions in mean field games}
\date{\today}
\author{%
  \IEEEauthorblockN{Bruce Hajek and Michael Livesay}
  \IEEEauthorblockA{Department of Electrical and Computer Engineering
    and Coordinated Science Laboratory\\
    University of Illinois, 
    Urbana, IL 61801, USA\\
    Email: \{b-hajek, mlivesa2\}@illinois.edu}
}

\maketitle

\begin{abstract}
The theory of mean field games is a tool to understand noncooperative dynamic stochastic games with a large number of players.     Much of the theory has evolved under conditions ensuring uniqueness of the mean field game Nash equilibrium.   However, in some situations, typically involving symmetry breaking, non-uniqueness of solutions is an essential feature.    To investigate the nature of non-unique solutions, this paper focuses on the technically simple setting where players have one of two states, with continuous time dynamics, and
the game is symmetric in the players, and players are restricted to using Markov strategies.   All the mean field game Nash equilibria are identified for a symmetric follow the crowd game. Such equilibria correspond to symmetric  $\epsilon$-Nash Markov equilibria for $N$ players with $\epsilon$ converging to zero as $N$ goes to infinity.

In contrast to the mean field game,  there is a unique Nash equilibrium for finite $N.$   It is shown that fluid limits arising from the Nash equilibria for finite $N$ as $N$ goes to infinity are mean field game Nash equilibria, and evidence is given supporting the conjecture that such limits, among all mean field game Nash equilibria, are the ones that are stable fixed points of the mean field best response mapping.
\end{abstract}

\section{Introduction and related work}

The theory of mean field games was initiated independently by Huang, Caines, and Malham\'{e}
\cite{huang2007large} and Lasry and Lions \cite{LasryLions07}.
The setting of Huang et al. is linear quadratic Gaussian (LQG) control and the setting of
Lasry and Lions is continuous state Markov diffusion processes.   The work of
Gomes, Mohr, and Souza \cite{gomes2013continuous} translates much of the theory of
\cite{LasryLions07} into the context of continuous time finite state Markov processes.
The LQG and finite state settings are technically simpler than the setting of continuous
state Markov processes.   All three of these works impose assumptions implying uniqueness
of solutions to the mean field game equations.

\iftoggle{short}{}{
The notion of Markov perfect equilibrium was introduced in
\cite{MaskinTirole84}.   It is basically a Nash equilibrium in a controlled
Markovian dynamics  framework, such that each
player can use a strategy that selects control actions based on
the current states of all players.    In particular, the constraint on
strategies for Markov perfect equilibria rules out trigger strategies such
that some player can be punished for past actions.
Given a game and $\epsilon > 0$,  a strategy profile is
defined to be an $\epsilon$-equilibrium (or $\epsilon$ Nash equilibrium) if it is not possible
for any player to gain more  than $\epsilon$ in expected payoff by unilaterally deviating
from his/her strategy. 
}%

The paper \cite{huang2007large} establishes $\epsilon$-Nash equilibrium properties for strategy profiles
consisting of the decentralized individual control laws that result as responses to the  collective mass trajectory.
Condition H1 of \cite{huang2007large} is a key to guaranteeing uniqueness
of the mean field equations,  In particular, for the other parameters fixed, the
value of $r$ in the term for control cost,  $ru^2$, should not be too small.  In essence, condition H1 restricts the level of coupling
among the players.   The mean field game (MFG) equations are expressed as a fixed point of an
operator $\cal T$ in \cite{huang2007large}.   Proposition 4.5 of  \cite{huang2007large} states that the fixed point
for $\cal T$ is globally attracting under condition H1 in the paper.
Section VI of \cite{huang2007large} illustrates a cost gap between individual and global based controls.
This is an example of the fact that the social welfare at a Nash equilibrium in game theory does not need to
equal the maximum social welfare achievable if the players were to cooperate.

The paper \cite{gomes2013continuous} studies the continuous-time, finite state version of mean field game theory.
Assumption 3, p. 110,  gives a monotonicity condition that ensures uniqueness of solutions to the mean field game equations.
Proposition 4 of \cite{gomes2013continuous}, on the existence of a mean field game Nash equilibrium
is proved by using Brouwer's fixed point theorem applied to the map $\theta \mapsto \xi(\theta),$ which is analogous
to the map $\cal T$ of  \cite{huang2007large}.   The domain of $\xi$  is the set $\calF$ of uniformly Lipschitz continuous
functions on the interval $[0,T].$

In contrast, multiple solutions of the mean field equations naturally
arise in \cite{YinMehtaMeynShanbhag12},  where synchronization of coupled
oscillators requires solutions that depart from the incoherence solution.
The setup is similar to the discrete-state setting we consider
in that it is in continuous time,  the players
are coupled through their running costs, and players can take
actions depending on their own states and on the states of the other
players.   But the setup in  \cite{YinMehtaMeynShanbhag12} is
different in that  the state space is continuous -- specifically it is the unit circle, 
and the focus is on infinite horizon average cost.  
The running cost for
player $i$,  $c(\theta_i, \theta_{-i}) = \frac 1 n \sum_j (1/2)\sin^2((\theta_i - \theta_j)/2 ),$
is join the crowd type;  it is smaller if the states are closer together.  
It is similar to flocking of birds or synchronization of fireflies.   The separate
Brownian motions of different players tend to make
them drift apart, and it requires cost for them to try to stick together.
If the coefficient $R$ for the cost is large enough it is not worth the
players trying to stick close together, and for the MFG limit they will stay
uniformly distributed over the circle (i.e. the incoherence solution).
As $R$ crosses below some critical value $R_c,$ the incoherence
solution still exists but it becomes unstable and additional solutions
appear.    We find an equivalent phenomena for the simpler discrete
state model in this paper.   In addition, our setting is considerably
simpler than that of \cite{YinMehtaMeynShanbhag12}, allowing us
to examine the stability of the mean field map $\cal T$ for  a finite time horizon.

\iftoggle{short}{}{
{\em Some related papers with discrete state models}
The paper \cite{WeintraubBenkardVanRoy08} introduces the notion of
oblivious equilibrium and compares it to the stronger equilibrium notion of
Markov perfect equilibrium.    In a Markov perfect equilibrium,
the actions of any player can depend on the current states of all players.
In contrast, for an oblivious equilibrium, the actions of any player can depend
only on the state of the player itself.   This limits the abilities of players to
react to fluctuations in population dynamics for a finite number of players.
However, in the mean field limit, the population dynamics becomes deterministic,
in which case the difference between the two equilibrium concepts diminishes
in the large number of players limit.   That is the notion explored in
\cite{WeintraubBenkardVanRoy08}.   An approximation theory of
\cite{WeintraubBenkardVanRoy08} shows that an oblivious equilibrium under
certain technical conditions can
be approximated by a Markov perfect equilibrium, while the converse direction
is not necessarily true.  The setting of  \cite{WeintraubBenkardVanRoy08} is
discrete time throughout.

Papers \cite{AltmanAvrachenkov_etal08} and  \cite{TembineLeBoudec_etal09}
discuss MFG for discrete state Markov processes.
Paper \cite{TembineLeBoudec_etal09} considers a so-called Markov
decision evolutionary game.  It is similar to the classical evolutionary dynamics
setting, but in contrast to the classical setting, players have both a type (that doesn't
change) and an internal state (that evolves in a Markov fashion).  
The number of players involved in an event at a discrete time point
is stochastically bounded, so as the number of players converges to infinity,  time is
sped up and a continuous time limit results.  A mean field limit for fixed Markov
policies exists by a Kurtz type theorem.     The setting of \cite{AltmanAvrachenkov_etal08}
is also a discrete state Markov process for each player,   The models of both
\cite{AltmanAvrachenkov_etal08} and  \cite{TembineLeBoudec_etal09} assume the
players use so-called {\em stationary policies}, such that the action of a player depends on
the type of the player and internal state of the player, but not on the states of other players. 
Thus, the equilibrium concept is oblivious equilibrium. 
}

\section{Problem formulation}   \label{sec.construction}

The model we adopt is almost a special case of the model
of \cite{huang2007large}.   We consider $N+1$ players
with each having state space $\{0,1\}.$    The state $(i(t) : 0 \leq t \leq T)$ of a given player
evolves as a controlled Markov process with predictable control $\alpha_t$, such that the jump
probabilities of the state process are given
by
$$
P(i(t+h)=1-i | i(t)=i) = (\alpha_t  + \eta) h + o(h)
$$
for $h >0.$   The parameter $\eta \geq 0$ represents a background jump rate, so if $\eta > 0$ then the
process has minimum jump rate $\eta.$     The background jumping is similar in spirit to the
Brownian motions that work against coherence of the coupled oscillators in \cite{YinMehtaMeynShanbhag12}.
The objective function of the reference player is to select $(\alpha_t)$ to solve
$$
\min_{\alpha}  E\left[ \int_0^T  c(i(t) , \theta_t, \alpha_t) dt  + \psi(i(T), \theta_T)  \right] ,
$$
where $\theta_t$ is the fraction of other players in state 0 at time $t.$
The running costs are assumed to have the form
$c(i,\theta,\alpha)=f(i,\theta)+\frac{\alpha^2}{2},$
such that the  residence costs per unit time, $f(0,\theta)$ and $f(1,\theta)$,
and terminal costs, $\psi(0,\theta), \psi(1,\theta),$ are all bounded, and
uniformly Lipschitz continuous in $\theta.$

\paragraph{Hamilton Jacobi Bellman (HJB) equation for $N+1$ player system}
A state feedback control for a given player is a nonnegative function
$(\alpha(i,n,t))$ such that $i\in\{0,1\}$ represents the current state of the
player,  $n\in\{0,\ldots , N\}$ represents the number of other player
in state 0, and $t\in [0,T].$    Suppose the reference player uses a state
feedback control $(\alpha(i,n,t))$, and the other $N$ players use state feedback
control $(\beta(i,n,t)).$   Then $(i(t), n(t))_{0\leq t \leq T}$ forms a controlled
Markov process on  $\{0,1\}\times \{0,  1, \ldots , N \},$ where $i(t)$ represents the state
of the reference player and $n(t)$ represents the number of other
players in state 0.  The transition rates are as follows:\footnote{If $j\neq i$ then
$i$ itself is one of the ``other players" for player $j.$}
$$ {\small
\begin{array}{c|l}
\mbox{transition} & ~~~~~~~~~~~~~~\mbox{rate} \\ \hline
(i,n) \rightarrow (1-i,n)  &  \alpha(i,n,t) + \eta \\
 (i,n) \rightarrow (i,n+1)  & \gamma^+(i,n,t) \\
 & ~~=(N-n) (\beta(1, n+1-i,t) + \eta). \\
 (i,n) \rightarrow (i,n-1) & \gamma^-(i,n,t) \\
& ~~=  n (\beta(0, n-i,t) + \eta).
\end{array} }
$$
Denote the cost-to-go function for the reference player by $u(i,n,t).$ 
The HJB equations for it are:
\begin{align}
&-\dot{u}(i,n,t)   =  f(i,n) -  \frac{((\alpha^*(i,n,t))^2}{2}\nonumber   \\
&~~~~ + \eta (u(1-i,n,t) - u(i,n,t))  \nonumber \\
& ~~~~ +  \gamma^+(i,n,t)(u(i,n+1,t)-u(i,n,t))  \nonumber  \\
&~~~~+ \gamma^- (i,n,t)(u(i,n-1,t)-u(i,n,t)),  \label{eq:HJB_N+1} \\
& u(i,n,T)=\psi(i,n)   \label{eq:HJB_N+1_bc}
\end{align}
where the corresponding control policy is 
\begin{align}
\alpha^*(i,n,t)= ( u(i,n,t)-u(1-i,n,t) )_+  \label{eq:HJB_N+1_policy}.
\end{align}
The HJB equations \eqref{eq:HJB_N+1}-\eqref{eq:HJB_N+1_policy} can be viewed in two different ways.
\begin{itemize}
\item For policy $\beta$ of the other $N$ players fixed, \eqref{eq:HJB_N+1} - \eqref{eq:HJB_N+1_policy}
determine the best response policy for the reference player.   i.e. $\alpha^*=BR(\beta).$
\item To find a symmetric Nash equilibrium,  replace $\alpha(\cdot, \cdot, t)$ and $\beta(\cdot, \cdot, t)$
by  $\alpha^*(\cdot,\cdot, t)$ in the definition of $\gamma^{\pm}$ and \eqref{eq:HJB_N+1}- \eqref{eq:HJB_N+1_policy}.
This yields a $2(N+1)$ dimensional ode with terminal boundary condition and Lipschitz continuous right hand
side that uniquely determines the functions $(u(i,n,t))$ and, hence also, the feedback control law $\alpha^*.$
The strategy profile such that all $N+1$ players use $\alpha^*$ is a Markov perfect Nash equilibrium, because
$\alpha^*$ is determined backwards from the terminal condition yielding a best response for any interval of the
form $[t,T].$   Moreover, the Markov perfect equilibrium is the unique
Nash equilibrium among all Markov type (i.e. state feedback) strategy profiles, because the similar HJB equations
for a more detailed model description with state space $\{0,1\}^{N+1}$ still has a unique solution and it is necessarily
invariant under permutation of the players.
\end{itemize}


\paragraph{Mean field game equilibria and map}

A mean field game Nash equilibrium for the finite horizon problem with
initial value $\overline{\theta}$ is any solution $(\theta_t,  u(i,t))$ to the following equations.\footnote{Note
the double use of notation ``$u.$"   We  write $u(i,t)$ for $u$ associated with mean field game solutions and $u(i,n,t)$
for $u$ associated with the $N+1$ player Markov perfect equilibrium.}
\begin{align}
&\dot{\theta_t} =(1-\theta_t)( (u(1,t)-u(0,t))_+ + \eta) \nonumber \\
&  ~~~~~ -  \theta_t  ((u(0,t)-u(1,t))_+ + \eta )    \label{eq:MFG_1} \\
&-\dot{u}(i,t)= f(i,\theta_t,t)  - \eta  (u(i,t)-u(1-i,t))       \nonumber \\
&~~~ - \frac{  ( (u(i,t)-u(1-i,t))_+ )^2}{2}    \label{eq:MFG_2}  \\
&   \theta_0= \overline{\theta}, \quad  u(i,T)= \psi(i,\theta_T).    \label{eq:MFG_3}
\end{align}
Note that the boundary conditions \eqref{eq:MFG_3} include both initial and terminal values.
The mean field equations \eqref{eq:MFG_1}-\eqref{eq:MFG_3} can be written
as a fixed point equation, $\theta = \calT(\theta)$,  where  ${\cal T}$ maps a collective mass trajectory $(\theta_t : 0 \leq t \leq T)$ to another
trajectory.    It is determined by first computing the decentralized individual control laws for the players.   Then by the
uniform law of large numbers
\iftoggle{short}{\cite{GineZinn}}{(see Appendix \ref{sec:unif_law_large_numbers})}, if each of the players follows the
same decentralized individual control law, their state processes  will be independent and the
empirical average of such processes will converge to an expected $\tilde \theta$ that is the output collective
mass trajectory.   More concretely, $\calT(\theta)$ is defined as follows.
First, cost-to-go functions $(u(i,t))$ are determined by the HJB terminal value problem
for a single player, in response to the collective mass trajectory $\theta.$
\begin{align}
 -\dot{u}(i,t) = f(i,\theta_t) - \frac{  ( (u(i,t)-u(1-i,t))_+ )^2}{2}\nonumber  \\- \eta  (u(i,t)-u(1-i,t))  \label{eq:HJB_one_player}  \\
  u(i,T) = \psi(i,\theta_T).      ~~~~\mbox{boundary condition at $T$}  \label{eq:HJB_one_player_init}
\end{align}
Then $\tilde \theta_t,$ the probability a single player using the decentralized state-feedback
control  $\alpha_t(i,t) = (u(i,t)- u(1-i,t))_+$   is in state $0$ at time $t$,  
is determined by the initial value problem (Kolmogorov forward equation):
\begin{align*}
&\dot{\tilde \theta_t} = (1-\tilde \theta_t)( (u(1,t)-u(0,t))_+ + \eta)   \\
&~~~~~~~~   -  \tilde \theta_t  ((u(0,t)-u(1,t))_+ + \eta )   \\
&  \tilde \theta_0 = \overline{\theta}  ~~~~~~~~~~~~~~\mbox{boundary condition at 0} 
\end{align*}
Motivated by the law of large numbers,  $\tilde \theta$ is defined to be the new collective mass trajectory,
i.e.  $\tilde\theta = \calT(\theta).$

The mean field game equations \eqref{eq:MFG_1} and \eqref{eq:MFG_2}, with the addition of an average
cost per unit time term $\kappa$ on the right-hand side of \eqref{eq:MFG_2} correspond to an {\em infinite
horizon game} for average cost per unit time.    (See  \cite{gomes2013continuous}, Section 2.12, p. 117.)
In that case the value functions $u(i,t)$ represent realative  cost to go.   The boundary conditions \eqref{eq:MFG_3}
are replaced by the condition that $\theta$ be constant in time or be periodic.   

\paragraph{Fluid limits of Markov perfect equilibrium}

As noted in the introduction, there can be multiple mean field game Nash equilibria, even for a finite
horizon problem with given  boundary conditions.
A mean field game Nash equilibrium $(\theta_t, u(i,t))$
yields a decentralized player strategy  $\alpha_t(i,t) = (u(i,t)- u(1-i,t))_+.$    For finite $N$, the strategy profile
such that every player uses $(\alpha_t(i,t))$ is easily seen to be an $\epsilon$-Nash equilibria such that
$\epsilon \to 0$ as $N \to\infty.$
\iftoggle{short}{For details see the appendix of the full version of this paper.\footnote{See full version at arXiv.org.}}
{(See Appendix \ref{sec:unif_law_large_numbers}.)}

 However, for finite $N$ there is a unique Markov perfect Nash equilibrium strategy profile,
 so for a given initial condition, the distribution of the finite $N$ system is uniquely determined.
 It is natural, therefore, to single out collective mass trajectories that arise as limits of the mass trajectories
 for Markov perfect equilibria.
 
 \begin{defn} \label{def:fluid_traj}
 Let $n^N(t)$ denote the number of players in state 0 at time $t$ under the unique symmetric Markov perfect
 equilibrium for the $N+1$ player game, and for some initial condition depending on $N.$
Then $\theta = (\theta_t : 0 \leq t \leq T)$ is a {\em fluid limit Markov perfect trajectory} (FLMP trajectory) if for some
sequence of initial states  with  $\lim_{N \to \infty} \frac{n^N(0)}N \to \theta_0,$  the following holds for any $\eps>0$, 
\begin{align} \label{eq:fluid_lim}
\lim_{N \to \infty}  \P\left[   \bigg|  \frac{n^N(t)}{N+1} - \theta_t \bigg|  < \eps \mbox{ for } 0 \leq t \leq T  \right] =1. 
\end{align}
\end{defn}

\begin{proposition}  \label{prop:FLMP_MFG}
Suppose $\eta >0.$   An FLMP trajectory is a mean field game Nash equilibrium.
\end{proposition}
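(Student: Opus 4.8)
The plan is to show that, along any subsequence of $N$ for which the rescaled value functions converge, the limit of those value functions evaluated on the FLMP trajectory $\theta$, together with $\theta$ itself, satisfies the mean field game system \eqref{eq:MFG_1}--\eqref{eq:MFG_3}. Write $u^{N}(i,n,t)$ for the cost-to-go of the reference player under the unique symmetric Markov perfect equilibrium, i.e. the solution of \eqref{eq:HJB_N+1}--\eqref{eq:HJB_N+1_policy} with $\beta=\alpha^{*}$, and set $\hat u^{N}(i,\theta,t):=u^{N}(i,\lfloor\theta(N+1)\rfloor,t)$. The technical core is a uniform first-difference (``gradient'') estimate
\[
\sup_{i,n,t}\bigl|u^{N}(i,n+1,t)-u^{N}(i,n,t)\bigr|\ \le\ \frac{C}{N+1},
\]
with $C$ independent of $N$, which I would establish by coupling two copies of the $(N+1)$-player equilibrium process started from configurations differing in one player's state and bounding the resulting discrepancy in cost-to-go via the Lipschitz continuity of $f,\psi$ in $\theta$ together with a Gr\"onwall/short-interval bootstrap; the minimum jump rate $\eta>0$ is what keeps the coupled discrepancy from growing out of control. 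Three consequences follow: the $\hat u^{N}$ are equi-Lipschitz in $t$ (the $\gamma^{\pm}$-terms in \eqref{eq:HJB_N+1} are $O(N)\cdot O(1/N)=O(1)$) and in $\theta$; the equilibrium control $\alpha^{*}(i,n,t)=(u^{N}(i,n,t)-u^{N}(1-i,n,t))_{+}$ changes by $O(1/N)$ when $n$ changes by one; and, with the uniform bound $\|u^{N}\|_{\infty}\le C'$ coming from boundedness of $f,\psi$, Arzel\`a--Ascoli gives a subsequence along which $\hat u^{N}\to\bar u$ uniformly, $\bar u$ Lipschitz. Work along this subsequence and put $\bar U(i,t):=\bar u(i,\theta_{t},t)$ and $\bar\alpha(i,t):=(\bar U(i,t)-\bar U(1-i,t))_{+}$.

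Next I would handle the mass equation. Write $\tfrac{n^{N}(t)}{N+1}=\tfrac{n^{N}(0)}{N+1}+\int_{0}^{t}D^{N}(s)\,ds+M^{N}_{t}$, where $D^{N}(s)=(\gamma^{+}-\gamma^{-})(i^{N}(s),n^{N}(s),s)/(N+1)$ and $M^{N}$ is a martingale with quadratic variation $O(1/N)$ (one $n$-jump moves $n/(N+1)$ by $1/(N+1)$, total jump rate $O(N)$), so $M^{N}\to0$ uniformly in probability by Doob's inequality. From the transition-rate table, up to $O(1/N)$ errors $\gamma^{+}/(N+1)=(1-\tfrac{n}{N+1})(\alpha^{*}(1,n,s)+\eta)$ and $\gamma^{-}/(N+1)=\tfrac{n}{N+1}(\alpha^{*}(0,n,s)+\eta)$; since $\hat u^{N}\to\bar u$ uniformly and (by Definition~\ref{def:fluid_traj}) $n^{N}(s)/(N+1)\to\theta_{s}$ uniformly in probability, the integrand converges uniformly to $\bar D(\theta_{s},s):=(1-\theta_{s})(\bar\alpha(1,s)+\eta)-\theta_{s}(\bar\alpha(0,s)+\eta)$, and passing to the limit in the integral equation (using continuity of $\theta$ and affine dependence of $\bar D$ on $\theta$) gives $\dot\theta_{t}=\bar D(\theta_{t},t)$, which is exactly \eqref{eq:MFG_1} since $\bar\alpha(1,t)=(\bar U(1,t)-\bar U(0,t))_{+}$, $\bar\alpha(0,t)=(\bar U(0,t)-\bar U(1,t))_{+}$. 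The same density-dependent (Kurtz-type) estimate, using the uniform Lipschitz bound on the rates from Step 1, applies when the system is restarted at any time $t$ from a configuration with $n^{N}(t)/(N+1)\to\theta_{t}$: it then concentrates on the solution of $\dot\psi=\bar D(\psi,\cdot)$ with $\psi_{t}=\theta_{t}$, which by uniqueness is $\theta$ on $[t,T]$. This fact is needed in the next step.

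To identify $\bar U$, the key algebraic fact is that, because $u^{N}$ solves the HJB equation with the optimal control already substituted, its extended generator telescopes: a direct computation from \eqref{eq:HJB_N+1}--\eqref{eq:HJB_N+1_policy} gives $\partial_{t}u^{N}+\calL_{t}u^{N}=-\bigl(f(i,\tfrac{n}{N+1})+\tfrac12\alpha^{*}(i,n,t)^{2}\bigr)$, where $\calL_{t}$ is the generator of $(i^{N}(t),n^{N}(t))$ under $\alpha^{*}$; in particular all transition-in-$n$ (transport) terms cancel against $-\tfrac12(\alpha^{*})^{2}$. Dynkin's formula applied to the equilibrium process started from state $(i,n)$ at time $t$ then yields
\[
u^{N}(i,n,t)=\E\Bigl[\int_{t}^{T}\!\Bigl(f\bigl(i^{N}(s),\tfrac{n^{N}(s)}{N+1}\bigr)+\tfrac12\alpha^{*}(i^{N}(s),n^{N}(s),s)^{2}\Bigr)ds+\psi\bigl(i^{N}(T),\tfrac{n^{N}(T)}{N+1}\bigr)\,\Big|\,(i^{N}(t),n^{N}(t))=(i,n)\Bigr].
\]
Evaluating at $n=\lfloor\theta_{t}(N+1)\rfloor$ and letting $N\to\infty$ along the subsequence: the left side tends to $\bar U(i,t)$; by the restarted fluid limit $n^{N}(\cdot)/(N+1)\to\theta_{\cdot}$ uniformly in probability on $[t,T]$; by the gradient estimate $\alpha^{*}(i^{N}(s),n^{N}(s),s)$ is within $O(1/N)$ of $\bar\alpha(i^{N}(s),s)$, so the reference player's jump rate converges and $i^{N}(\cdot)\Rightarrow i^{*}(\cdot)$, the two-state time-inhomogeneous chain with rates $\bar\alpha(\cdot,s)+\eta$. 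Using these convergences, uniform integrability from the $L^{\infty}$ bound, continuity of $f,\psi$ and $\hat u^{N}(\cdot,\cdot,T)\to\psi$, the identity passes to the limit to $\bar U(i,t)=\E[\int_{t}^{T}(f(i^{*}(s),\theta_{s})+\tfrac12\bar\alpha(i^{*}(s),s)^{2})\,ds+\psi(i^{*}(T),\theta_{T})\mid i^{*}(t)=i]$. Applying Dynkin's formula backward to $i^{*}$ and simplifying with $\tfrac12\bar\alpha(i,t)^{2}+\bar\alpha(i,t)(\bar U(1-i,t)-\bar U(i,t))=-\tfrac12\bar\alpha(i,t)^{2}$ gives precisely \eqref{eq:MFG_2} for $\bar U$, with terminal condition $\bar U(i,T)=\psi(i,\theta_{T})$, i.e. \eqref{eq:MFG_3}. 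Hence $(\theta_{t},\bar U(i,t))$ solves \eqref{eq:MFG_1}--\eqref{eq:MFG_3} and is the desired mean field game Nash equilibrium.

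\textbf{Main obstacle.} I expect the uniform gradient estimate to be the crux: it underlies both the compactness of $\{\hat u^{N}\}$ and the convergence of the $\gamma^{\pm}$-terms (without it the transport terms are $O(N)$ and nothing converges), and it is the one place where the hypothesis $\eta>0$ is genuinely used. A secondary point requiring care is the joint convergence $i^{N}(\cdot)\Rightarrow i^{*}(\cdot)$ together with the restarted fluid convergence of $n^{N}(\cdot)$, which I would handle by a standard martingale-problem / Skorokhod-representation argument once the rates are known to converge.
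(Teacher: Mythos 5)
Your architecture (compactness of the $N$-player value functions, a Kurtz-type argument for the mass equation, and a Dynkin/verification step whose final algebra is indeed correct) is a genuinely different route from the paper's, but it has a real gap at exactly the point you flag as the crux. The uniform first-difference estimate $\sup_{i,n,t}|u^{N}(i,n+1,t)-u^{N}(i,n,t)|\le C/(N+1)$ is not only unproved here; it cannot hold in the regime this paper is actually about. Such a bound would force the limit of $u^{N}(i,\lfloor\theta (N+1)\rfloor,t)$ to be Lipschitz in $\theta$ uniformly in $t$, whereas in the non-uniqueness regime (follow the crowd, $\eta<1/2$, $T>T_c(\eta)$) the cost-to-go difference $n\mapsto u(1,n,t)-u(0,n,t)$ develops a sharp sign-change along the bifurcation/indifference curve (see Fig.~\ref{fig:PDE_follow_term} and the appendix on the MFG PDE, which is a first-order hyperbolic equation with non-unique solutions, i.e.\ shock-type behavior is expected); across that curve the one-step differences are of larger order than $1/N$. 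Your proposed coupling/Gr\"onwall bootstrap is also circular: the rate at which two coupled copies of the equilibrium process drift apart is governed by the Lipschitz constant in $n$ of the equilibrium feedback $\alpha^{*}(i,n,t)=(u^{N}(i,n,t)-u^{N}(1-i,n,t))_{+}$, which is the very quantity being estimated; without a monotonicity/uniqueness hypothesis this closes only on short horizons, and a background rate $\eta>0$ does not repair it. Even the localized version along the FLMP trajectory that your limit passages actually require (convergence of $\alpha^{*}$ evaluated at $n\approx\theta_t(N+1)$, plus the ``restarted'' fluid limit) is a substantial task, not a routine lemma.

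The paper avoids $n$-regularity of $u^{N}$ entirely. Its key step is an insensitivity lemma: if the single reference player deviates from $\alpha^{*}$ to any bounded policy, the fluid limit \eqref{eq:fluid_lim} still holds under the perturbed law. This is proved by a Girsanov-type change of measure for point processes together with a H\"older/moment bound on the Radon--Nikodym derivative, and this is precisely where $\eta>0$ enters (the log-likelihood involves $\ln\bigl((\beta^{*}+\eta)/(\alpha^{*}+\eta)\bigr)$), not in any gradient estimate. Insensitivity implies that the reference player's problem at the $N$-player Markov perfect equilibrium is asymptotically the single-player problem against the deterministic trajectory $\theta$, so $u^{N}$ near the trajectory converges to the MFG cost-to-go $u(i,t)$, every player is asymptotically using the decentralized best response to $\theta$, and the fluid limit must therefore satisfy $\theta=\calT(\theta)$. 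To salvage your route you would need either a proof of the gradient estimate locally along the FLMP trajectory (away from indifference points) or the paper's change-of-measure device to decouple the reference player from the crowd --- at which point you have essentially reproduced the paper's argument.
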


See Appendix   \ref{FLMP_MFG_proof} for a proof.
We conjecture the proposition is also true for $\eta =0,$ but a change
of probability measure argument in the proof breaks down if $\eta = 0.$
Proposition \ref{prop:FLMP_MFG} raises the question of how to identify
which mean field Nash equilibria are FLMP trajectories.

\paragraph{Contributions of the paper}

Proposition  \ref{prop:FLMP_MFG} is new and its proof extends to the general setting
of \cite{gomes2013continuous}.   It shows that the search for FLMP trajectories can be
limited to the mean field game Nash equilibria.
The next contribution of this paper is to identify all of the MFG equilibria for a natural special case of the two state
model called follow the crowd.   This model is analogous to the model of synchronization of oscillators game
\cite{YinMehtaMeynShanbhag12},  but considerably simpler, so we can identify the finite horizon
solutions as well as the infinite horizon ones.
The third contribution is to offer the following conjecture, and give evidence for it:
\begin{conjecture}   \label{conj:fluid_trajectories}
The FLMP trajectories are the stable fixed points of the MFG mapping $\calT.$
\end{conjecture}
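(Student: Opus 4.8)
The plan would be to prove the conjectured equivalence as two inclusions, both resting on a single mechanism: the unique finite-$N$ Markov perfect equilibrium is a vanishing-noise perturbation of the deterministic mean field forward/backward system, so its fluid limits can only sit at the \emph{stable} fixed points of $\calT$. By Proposition~\ref{prop:FLMP_MFG} one already knows that every FLMP trajectory is a mean field game Nash equilibrium, hence a fixed point of $\calT$ (recall \eqref{eq:MFG_1}--\eqref{eq:MFG_3} is the equation $\theta=\calT(\theta)$); so the whole content of the conjecture is the adjective ``stable,'' and what must be shown is that the $O(1/\sqrt N)$ fluctuations of the $N$-player system are attracted to the stable fixed points and repelled by the unstable ones.

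The first and hardest step would be a joint functional central limit theorem for the $(N+1)$-player Markov perfect equilibrium about a candidate fixed point $\theta^{*}=\calT(\theta^{*})$: starting the chain at $n^{N}(0)=\lfloor(N+1)\theta^{*}_{0}\rfloor$ and writing $m^{N}(t)=n^{N}(t)/(N+1)$, one would study the rescaled deviation $\sqrt N\,(m^{N}(t)-\theta^{*}_{t})$. Because the equilibrium couples the backward cost-to-go functions $u(i,n,t)$ with the forward empirical fraction, the limiting fluctuation should be a linear Gaussian process whose drift operator is exactly the linearization, around $\theta^{*}$, of the coupled HJB/Kolmogorov system \eqref{eq:HJB_one_player}--\eqref{eq:HJB_one_player_init} --- equivalently, of $\calT$ at $\theta^{*}$. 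Carrying this out would require pushing the a priori mean field estimates ($|u(i,n+1,t)-u(i,n,t)|=O(1/N)$ and $|\dot u(i,n,t)|=O(1)$, uniformly in $N,t$) to second order, controlling the compensated-jump martingales, and proving the nontrivial duality identity that the fluctuation drift coincides with $D\calT(\theta^{*})$; the $N$-dependence of the value functions is what makes this much harder than a textbook density-dependent-chain CLT, and this is where I expect the proof to stall.

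Granting such a theorem, both inclusions would follow quickly. If $\theta^{*}$ is a \emph{stable} fixed point, the linearized drift is contractive, the fluctuation process is an Ornstein--Uhlenbeck-type process that stays $O(1)$ on the $\sqrt N$ scale, and a Gr\"onwall/coupling estimate gives $\sup_{t\le T}|m^{N}(t)-\theta^{*}_{t}|\to0$ in probability, i.e.\ \eqref{eq:fluid_lim}, so $\theta^{*}$ is an FLMP trajectory. Conversely, if a fixed point $\theta$ is \emph{not} stable, the limiting linear process has a growing mode, so with probability bounded away from $0$ the deviation leaves any fixed $\eps$-tube before time $T$; then \eqref{eq:fluid_lim} fails and $\theta$ is not an FLMP trajectory. (If one did not wish to assume a fluid limit exists, one would instead note that any subsequential fluid limit is, by Proposition~\ref{prop:FLMP_MFG}, a fixed point of $\calT$ with the prescribed initial value, and use the contraction to exclude convergence to any competing unstable fixed point.)

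Since this first step is essentially open --- already the existence of a clean, non-subsequential fluid limit of the Markov perfect equilibrium for every $T$ is delicate, and the identification of the fluctuation drift with $D\calT$ requires the forward/backward duality mentioned above --- the paper does not attempt it in general. Instead, the conjecture is supported by carrying the analysis out explicitly for the follow-the-crowd model, in which all fixed points of $\calT$, their stability, and the FLMP trajectories can be computed in closed form and are found to coincide.
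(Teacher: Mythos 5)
You are addressing a statement that the paper itself labels a conjecture: there is no proof of Conjecture \ref{conj:fluid_trajectories} in the paper to compare against. What the paper supplies is (i) Proposition \ref{prop:FLMP_MFG} (for $\eta>0$), giving only the inclusion of FLMP trajectories among fixed points of $\calT$ with no stability claim, and (ii) evidence: numerical identification of the FLMP trajectories for follow-the-crowd, numerical iteration of $\calT$ (which the authors themselves describe as partly inconclusive, with apparent numerical artifacts), and the Gateaux derivative of $\calT$ at the zero trajectory, whose eigenvalues are computed exactly only for $\eta=0$ and whose threshold behavior for $\eta>0$ is itself left as a further conjecture. Your proposal correctly recognizes this state of affairs and is, by your own account, a strategy rather than a proof, with the decisive step declared open; in that sense it neither matches nor contradicts a paper argument, but it also does not establish the statement.

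The concrete gap in your outline, beyond ``the CLT is technically hard,'' is the claimed identification of the fluctuation drift of the finite-$N$ Markov perfect equilibrium with $D\calT(\theta^{*})$, and more fundamentally the passage from that identification to the conjecture. The map $\calT$ is a non-causal composition of a backward HJB solve on $[t,T]$ and a forward Kolmogorov solve on $[0,t]$ (its linearization is the integral operator with kernel $K(t,u)$), and ``stable fixed point'' refers to repeated iteration of this composition; the finite-$N$ equilibrium, by contrast, is a single coupled forward-backward object run once in time, in which $u(i,n,t)$ already anticipates the population's future behavior. There is no established principle equating the spectral radius of $K$ with growth or decay of temporal fluctuations of the $(N+1)$-player chain, so your converse direction (``unstable fixed point $\Rightarrow$ a growing mode expels the empirical fraction from the $\eps$-tube'') presumes exactly the selection mechanism the conjecture asserts rather than deriving it. Two smaller corrections: the inclusion you take from Proposition \ref{prop:FLMP_MFG} is proved only for $\eta>0$, whereas the paper's sharpest evidence (the Mercer eigenvalue computation and the Fig.~\ref{fig:bif_curv} experiments) lives at $\eta=0$; and your closing sentence overstates the paper --- the FLMP trajectories and their coincidence with stable fixed points are \emph{not} computed in closed form even for follow-the-crowd, but identified numerically, which is precisely why the statement remains a conjecture.
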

A similar type of conjecture is implicit in  \cite{YinMehtaMeynShanbhag12} based
on a notion of stability for constant, long-term average cost infinite horizon solutions,
called linear asymptotic stability.  The paper  \cite{YinMehtaMeynShanbhag12}
identifies the critical cost threshold at which the incoherence solution becomes unstable.
 In addition to giving evidence for Conjecture \ref{conj:fluid_trajectories} in the setting of finite horizon games,
we also show that the results of \cite{YinMehtaMeynShanbhag12} for constant, long-term average cost
infinite horizon solutions,  carry over to the setting of two state Markov processes.   For the infinite horizon framework,
we show asymptotic stability of certain fixed points for the nonlinear dynamics in Section \ref{sec:infinite_horizon_transient},
\iftoggle{short}{and an appendix in the full version of this paper}{and Appendix \ref{app:linear asymptotic stability}}
gives an analysis based on the notion of linear asymptotic stability introduced in \cite{YinMehtaMeynShanbhag12}.
Additional results are given in the appendix of
\iftoggle{short}{the full version of}{} this paper, including, for contrast, a similar analysis for
an avoid the crowd model with unique mean field game solutions,  and a description of a partial differential
equation (PDE) (given for more general model in \cite{gomes2013continuous})
 that can be considered to be an extension of the notion of mean field game.

\section{MFG equilibria for follow the crowd}

The {\em follow the crowd} model corresponds to the following
cost per time spent in state $i$:
\begin{align*}
f(i,\theta)=|1-\theta - i | =\left\{ \begin{array}{cl} 1-\theta & i=0 \\
\theta & i=1
   \\
\end{array} \right.
\end{align*}
In particular, if  $\theta > 1/2$ (more than half of the other players in state 0),
then state 0 has smaller cost per unit time than state 1.

Letting  $y = u_1-u_0,$ $x = 2\theta -1,$ the mean field equations
 \eqref{eq:MFG_1}- \eqref{eq:MFG_3}  can be written as:
\begin{align}   \label{eq:follow_MFGxy}
\begin{array}{l}
~~ \dot x   = y-x|y| - 2\eta x  \\
-\dot y = x - \frac12 y|y| - 2\eta y   
\end{array}
\end{align}
with the boundary conditions  $x_0 = 2\overline{\theta} - 1$ and
$y_T=\psi\left(1,\frac{1+x_T} 2\right) - \psi\left(0,\frac{1+x_T} 2\right).$
Once a solution $(x,y)$ to \eqref{eq:follow_MFGxy} is found
for the finite horizon problem over $[0,T]$, a corresponding solution
$(u_0,u_1,\theta)$ to the mean field game equations
can be found by simply integrating \eqref{eq:MFG_1}- \eqref{eq:MFG_2}
 because the righthand sides of  \eqref{eq:MFG_1}- \eqref{eq:MFG_2}
 are determined by $(x_t, y_t).$

 A useful fact is that the equations \eqref{eq:follow_MFGxy}  form a Hamiltonian system,
for the Hamiltonian function $H$:
\begin{align} \label{eq:Hxy}
H(x,y)= \frac{x^2 - 4 \eta xy + y^2 - xy|y|}{2}.
\end{align}
In other words, \eqref{eq:follow_MFGxy} has the form  $\dot x = H_y$ and $\dot y = -H_x,$
where $H_x$ and $H_y$ represent partial derivatives of $H.$
Consequently, the value of $H$ is constant along the solutions of  \eqref{eq:follow_MFGxy},
because $\frac{dH(x_t,y_t)}{dt} = \langle \nabla H,    \binom{H_y}{-H_x}\rangle \equiv 0,$  so
the trajectories trace out level contours of $H.$  This model is a special case
of potential mean field games defined in \cite{gomes2013continuous}, Section 5, for which Hamiltonians
exist. 

Contour maps of $H$ are shown in  Fig.  \ref{fig:follow_beta_Ham} for various values of $\eta.$
\begin{figure}[htbp]
\post{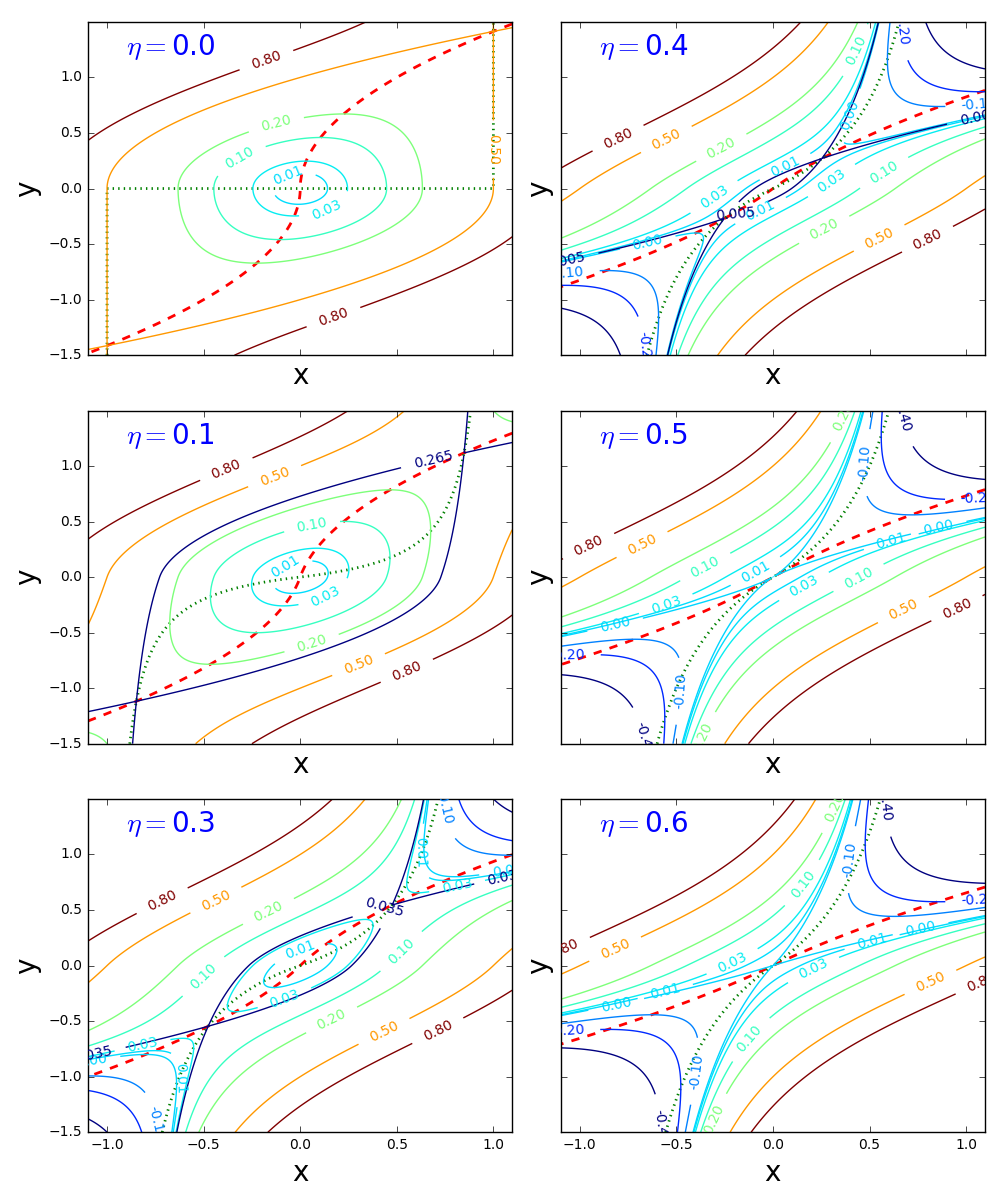}{8}
\caption{Contour plot of $H$ for several values of $\eta.$   Dashed lines are the zero sets
of  $H_x,$ and dotted lines are the zero sets of $H_y.$  The intersections
of dotted and dashed lines are the critical points of $H$  (i.e. solutions to $\nabla H = 0.$)}
\label{fig:follow_beta_Ham}
\end{figure}
For small values of $x,y$ the quadratic terms in
$ H$ dominate the cubic term, and for $\eta < 1/2$, constant
$x^2 - 4 \eta xy + y^2$ gives elliptical orbits of $x,y,$ in the
clockwise direction.

\subsection{Finite time horizon mean MFG solutions}
For the finite horizon mean field game with zero terminal cost (i.e. terminal boundary condition $y_T=0$),
and initial state $x_0=0$, correspond
to paths that begin on the $y$ axis (so the initial condition $x_0=0$ is satisfied) and end on the
$x$ axis.    One solution is $(x_t,y_t) \equiv (0,0)$ for $0\leq t \leq T.$ 
Let $\phi = \arctan\left( \frac x y \right)$ denote the angle of $x,y$ from the positive $x$ axis.
The angular velocity of $(x,y)$ is given by
\begin{align}  \label{eq:angular_xy}
\dot \phi = \frac{\dot y x - y \dot x }{x^2+y^2} =  -1  + \frac{\frac 3 2 xy|y| + 4\eta xy}{x^2 + y^2}
\end{align}
It is negative along the $y$ axis, indicating clockwise motion.  If $\eta \geq 1/2$ then $\dot\phi > 0$ along
the line $x=y$, indicating that $y=0$ is never reached.   Thus, if $\eta \geq 1/2$,  the trajectory
$(0,0)$ is the only MFG equilibrium.

If $\eta < 1/2$ then $\dot\phi < 0$ for $(x,y)$ in a neighborhood of
the origin, indicating clockwise movement.  Moreover, for $\phi$ fixed,  $\dot\phi$ is an increasing
function of the distance of $(x,y)$ to the origin (decreasing angular speed because angular velocity is negative).
Thus, the time for $(x,y)$ to traverse a contour across
the first quadrant is increasing in $y_0.$ for $y_0 > 0.$    As $y_0 \to \zero$ the dynamics is given,  to first order,
by the MFG linearized about $(0,0)$, given by
\begin{align}   \label{eq:follow_MFGxy_linearized}
\begin{array}{l}
~~ \dot x   = y - 2\eta x  \\
-\dot y = x  - 2\eta y   
\end{array}
\end{align}
with solution of the form (setting $x_0=0$ and $y_0>0$):
\begin{align*}
x_t &= \sin\left(\sqrt{1-4\eta^2}~t \right)  \\
y_t &= \sin\left(\sqrt{1-4\eta^2}~t  + \arccos(2\eta) \right)
\end{align*}
The time it takes the linear system to traverse the first quadrant is $T_c(\eta) \triangleq  \frac{\pi - \arccos(2\eta)}{\sqrt{1-4\eta^2}}.$
Hence, as $y_0 \to 0,$  the traversal time for the quadrant converges to $T_c(\eta).$
Thus, for $\eta < 1/2$ and $T \leq T_c(\eta),$   $(0,0)$ is the unique solution to the
MFG.    For  $T  >  T_c(\eta)$ there is one more solution that remains in, and traverses, the first quadrant,
and the negative of that solution remains in, and traverses, the third quadrant.
For $T$ large enough there are solutions that traverse contours of $H$ through three quadrants,  five quadrants, and so on.
A similar radial velocity analysis for the pair $(y,\dot y)$ (see
\iftoggle{short}{appendix of full version of the paper)}{Appendix \ref{sec:period_monotonicity})}
establishes that the entire periods of the dynamical system are increasing with amplitude,
as illustrated in  Fig. \ref{fig:multi_solutions}.   Since the dynamics is symmetric under rotation by $\pi,$ we
conclude that for any odd number $k$, starting on the positive $y$ axis, the time required to rotate through $k$ quadrants
is increasing in the initial condition $y_0.$   Therefore,  as $T$ increases from 0, the number of solutions starts at one
and jumps up by two when $T$ crosses times of the form
$T_c + k\pi /(\sqrt{1-4\eta^2})$ for $k\geq 1.$  Equivalently, the number
of solutions is
$1 + 2  \left\lceil  \frac{(T-T_c)\sqrt{1-4\eta^2}}{\pi}  \right\rceil.$
\begin{figure}\centering
\post{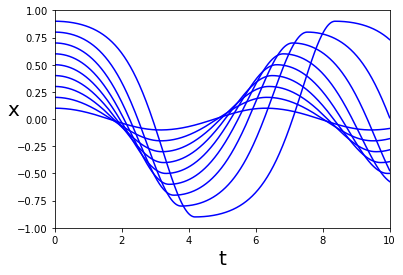}{4}
\caption{Several solutions with various terminal values of $x$ run backwards in time, for
follow the crowd dynamics with $\eta=0.$}
\label{fig:multi_solutions} 
\end{figure}

\subsection{Infinite horizon constant or periodic MFG solutions}
The equilibrium points of the dynamics \eqref{eq:follow_MFGxy}
are the critical points of the Hamiltonian function (i.e.  $\nabla H =0$),
and are given as follows. 
If $0 \leq \eta < 0.5,$ $(0,0)$ is an equilibrium point and 
there are also exactly two nonzero equilibrium points, given by $\pm \overline{P}$, where
\begin{align}   \label{eq:resting_points}
\overline{P} = \binom{\overline{x}}{\overline{y}} \triangleq  \binom{1-\eta^2 - \eta\sqrt{2+\eta^2}}{\sqrt{2+\eta^2} - 3 \eta}.
\end{align}
If $\eta \geq 0.5,$ $(0,0)$ is the unique equilibrium point.

Regarding infinite horizon periodic solutions,
examination of $H$ and the equations for angular velocity, \eqref{eq:angular_xy} and similar equation for angle of $(y,\dot y)$,
lead to the following conclusions.    If $0 \leq \eta < 0.5,$  
there is a two-dimensional family of periodic solutions that can be indexed by the peak amplitude of $x$ (ranges over $(0,\overline{x}))$
and phase.  The period of the solutions increases continuously
over $(2\pi/\sqrt{1-4\eta^2}, \infty)$ as the peak amplitude of $x$ increases over $(0,\overline{x})$.
 If $\eta \geq 0.5,$   there are no periodic solutions of \eqref{eq:follow_MFGxy}.

 \subsection{Infinite horizon convergent transient MFG solutions, and the asymptotically stable
constant solutions}   \label{sec:infinite_horizon_transient}
 
Consider the initial value problem over $t \in [0, \infty)$ with some initial condition  $(x_0,y_0)$  and
dynamics \eqref{eq:follow_MFGxy}.   First, suppose $0 \leq \eta < 0.5.$  For any initial condition $(x_0,y_0)$
such that $x_0 \neq 0$,  one of four cases holds:  $x_t$ is periodic with
a positive period,  $x$ converges to $\overline P$, $x$ converges to $-\overline P$, 
or $x_t$ exits $[-1,1]$ in finite time.   The following categorize the convergent solutions such that $x_t$ remains in $[-1,1].$
\begin{itemize}
\item For any initial value
of $x_0 \in (-\overline{x}, \overline{x})$,  there exist two corresponding initial values of $y_0$ such that
the solution of the initial value problem satisfies  (i) $x_t \in [-1,1]$ for all $t$ and  (ii) the solution converges to a
limit as $t \to \infty.$    For the smaller value of $y_0$ the limit is $-\overline{P}$ and for the larger value of $y_0$ the limit is $\overline{P}.$
The value of the larger $y_0$ for example is such that the contour of $H$ through $(x_0,y_0)$ contains $\overline{P}.$
\item For an initial value $x_0 \in [-1,-\overline{x}]$ there exists a unique value of $y_0$ such 
that the solution of the initial value problem satisfies $x_t \in [-1,1]$ for all $t.$   That solution converges  to
$-\overline{P}$ as $t\to\infty.$  
\item Similarly,  for an initial value $x_0 \in [\overline{x},1]$ there exists
a unique value of $y_0$ such  that the solution of the initial value problem satisfies $x_t \in [-1,1]$ for all $t.$
That solution converges  to  $\overline{P}$ as $t\to\infty.$ 
\end{itemize}

Second, suppose $\eta \geq 0.5.$  
For any $x_0 \in [-1,1]$, there is a unique
value of $y_0$, such that the solution of the initial value problem for \eqref{eq:follow_MFGxy}
satisfies $x_t \in [-1,1]$ for all $t.$   Furthermore,  $y_0$ has the same sign as $x_0$, and the
solution converges to $(0,0)$ as $t \to \infty.$  The value of $y_0$ is the root of
$H(x_0,y_0)=0$ (for $x_0$ fixed) that is closer to zero.

 The above observations give a sense in which $\pm \overline P$ is an asymptotically stable
 equilibrium point of the dynamics \eqref{eq:follow_MFGxy} if $0 \leq \eta < 0.5,$  and
 $(0,0)$ is an asymptotically stable equilibrium point if $\eta \geq 1/2.$   This sense
 of stability is not the usual definition of (Lyapunov) stability because we ask, for given $x_0$,
 whether there exists an associated value of $y_0$ giving the desired convergence.   The asymptotically
 stable limit points are saddlepoints of $H.$
 
As mentioned above, a related definition of stability, called linear asymptotic stability, is formulated in \cite{YinMehtaMeynShanbhag12}.
That definition and the results of \cite{YinMehtaMeynShanbhag12} for it are translated to the model of this
paper in \iftoggle{short}{the appendix in the full version of this paper.}{Appendix \ref{app:linear asymptotic stability}.}

\section{Evidence for  Conjecture \ref{conj:fluid_trajectories}}

In order to explore whether  Conjecture \ref{conj:fluid_trajectories} is true,
it is natural to explore two sides of the question.   One side is to identify the FLMP
trajectories.    Numerically that can be done by solving the $2(N+1)$
dimensional HJB equation for the system with $N+1$ players to find
the strategy $\alpha^*(i,n,t)$ players use for the Markov perfect equilibrium with $N+1$
players, and then either simulating the corresponding occupancy process through
Monte Carlo simulation of $N+1$ players independently using that policy,
or solving the Kolmogorov forward equations to find the marginal distribution,
mean and variance of the number of players in state 0 vs. time.

The other side is to identify the stable fixed points of $\calT.$
Two ways to explore which fixed points of $\calT$ are stable are to either
numerically investigate the orbit trajectories as $\calT$ is repeatedly
applied to some initial trajectory, or to examine the linearization of $\calT$
about a fixed point--this is the Gateaux derivative and it can be expressed
as an integral operator.   The eigenvalues can be computed numerically,
and in rare cases, analytically.   By abuse of notation, we use $\calT$ to
denote the mean field map as a mapping $T(x) \mapsto \tilde x$ obtained
by the change of coordinates $x = 2\theta -1.$ 

\paragraph{Numerical identification of FLMP trajectories}
For the symmetric follow the crowd model,  numerical analysis strongly
and consistently indicates which MFG solutions are FLMP trajectories.
We find that for $\eta \leq 1/2$ they coincide with
the unique MFG equilibrium -- namely, the (0,0) trajectory over $[0,T].$
And for $\eta > 1/2$ there are two FLMP trajectories.   Namely, the one
that traverses the first quadrant in the x-y plane once, and the negative of it,
which traverses the third quadrant  in the x-y plane once.   In particular,
the solutions that wind around the origin through three or more quadrants do not
appear to be FLMP solutions.  See Fig. \ref{fig:bif_curv} for illustration.
\begin{figure}
\centering
\begin{subfigure}{.23\textwidth}\centering
\includegraphics[width=1\linewidth]{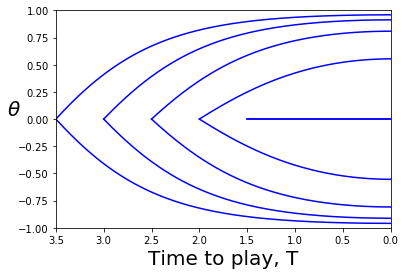}
\end{subfigure}%
\begin{subfigure}{.23\textwidth}\centering
\includegraphics[width=1\linewidth]{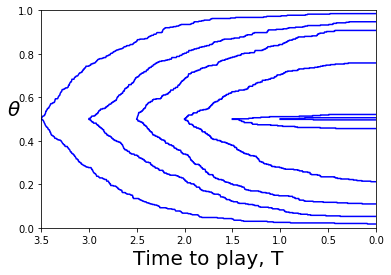}
\end{subfigure}%
\caption{
On the left is a set of realizations of the $N+1$-player game with $400$ players and various time to play,
with initially 200 players in each state.  On the right, are the MFG solutions believed to be
the FLMP trajectories.   Both are for follow-the-crowd game with $\eta=0.$
\label{fig:bif_curv}}
\end{figure}
For less symmetric examples it is less obvious where the {\em bifurcation curve}
is that separates FLMP solutions that converge to a point closer to 1, or converge
to a point closer to 0.   The bifurcation curve often coincides with a line or curve
of indifference for the $N+1$ player game with a large number of players,
corresponding to upcrossings of zero by the mapping $n \mapsto u_1(0,n,t) - u_0(0,n,t) .$
This is illustrated in Fig. \ref{fig:PDE_follow_term}.

\begin{center}
\includegraphics[scale=0.5]{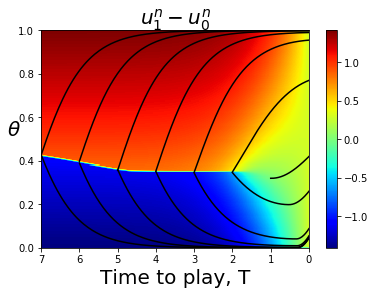} \\
\includegraphics[scale=0.43]{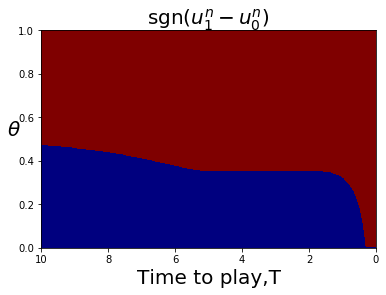}
\includegraphics[scale=0.3]{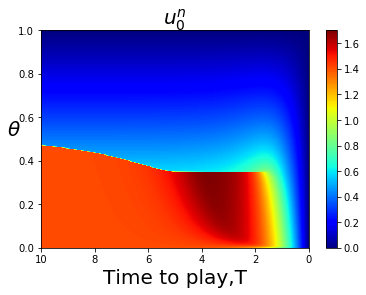}
\includegraphics[scale=0.3]{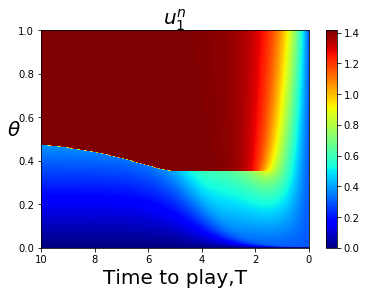}
\captionof{figure}{
Heat maps for cost-to-go functions for follow the crowd, $f(i, \theta)=|\theta-(1-i)|$,
with $N=400$, $T=10$, $\eta=0,$ and asymmetric terminal cost:  $\psi(1)=0.3$ and $\psi(0)=0.$
The MFG equilibrium trajectories beginning at the bifurcation curve are overlaid
onto the heat map of $u_1-u_0$ in the top figure.  
} \label{fig:PDE_follow_term}
\end{center}

\paragraph{Examination of orbits of $\calT$}  
Recall that the fixed points of $\calT$ are the  collective mass trajectories
$(\theta_t : 0\leq t \leq \calT)$ of mean field Nash equilibria.   To numerically
investigate the stability of fixed points of $\calT$ we generated sequences of
iterates of trajectories  $(\theta^n )_{n\geq 0}$ defined by
$\theta^{n+1} = \calT(\theta^n),$  where the initial point $\theta^0$ is a perturbation
of a fixed point.    Figure \ref{fig:more_noise_diverge_nonreal} shows
such sequences of iterates such that the initial trajectory is a
perturbation of one of the two MFG Nash equilibria that cross zero
one time, for the follow the crowd game and time horizon $T=20.$
In both instances, the iterates converged to one of the two equilibria
with no zero crossings.
\begin{figure}\centering
\begin{subfigure}{.25\textwidth}\centering
\includegraphics[width=1\linewidth]{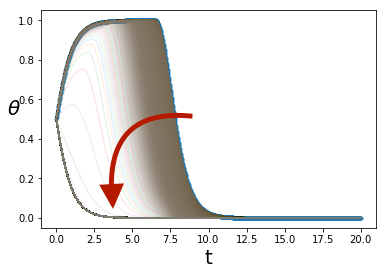}
\end{subfigure}%
\begin{subfigure}{.25\textwidth}\centering
\includegraphics[width=1\linewidth]{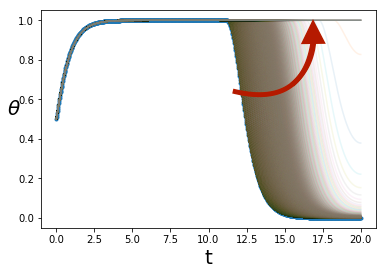}
\end{subfigure}%
\caption{Iterates $(\theta^n )_{0 \leq n \leq 10000}$ for two different initial trajectories
that are perturbations of a single-cross MFG Nash equilibrium, which is indicated by a
thick blue line.} 
\label{fig:more_noise_diverge_nonreal}
\end{figure}

However, overall we found it difficult to numerically verify that a given solution
is not a stable fixed point.  On one hand,  some MFG solutions that we
don't expect to be stable, such as the trajectory that crosses zero once, numerically
appear to be asymptotically stable for a very small basin of stability.   On
the other hand, we have found perturbations of MGF solutions that also
numerically appear to be asymptotically stable, indicating numerical  artifacts are possible.

\paragraph{Linearization of $\calT$ about $(0,0)$}
 Given  a fixed point $\bar x = \calT ( \bar x)$, the Gateaux derivative
$d\calT_X(\bar x, x)$, or the directional derivative of $\cal T$ at $\bar x$
in the direction $x,$  is obtained by linearizing $\cal T$ about $\bar x.$
This is particularly simple if $\bar x$ is the zero trajectory.
\iftoggle{short}
{(Linearization about a nonzero trajectory is given in the full version of this paper.)}  
{(Linearization about a nonzero trajectory is given in Appendix \ref{sec:op_follow}.)}  
In that case, the linearized MFG equations are:
\begin{align}  
\begin{array}{l}
~~ \dot x   = y - 2\eta x  \\
-\dot y = x  - 2\eta y   
\end{array}
\end{align}
Given $(x_u)$,  $\tilde x = d\calT_X(\bar x, x)= \calL_2 \calL_1 x$,
where $\calL_1$ and $\calL_2$ are linear operators defined as:
\begin{align*}
&y_s = (\calL_1 x)_s  =  \int_s^T e^{-2\eta(T-u)}   x_u du    \\
&\tilde x_t  = (\calL_2 y)_t  = \int_0^t  e^{-2\eta(t-s)}  y_s ds
\end{align*}
These expressions can be combined to yield
\begin{align*}
&x_t  = \int_0^T   K(t,u)  x_u du  
\end{align*}
where  $K(t,u) =   e^{-2\eta (t \vee u)} \sinh(2\eta (t \wedge u))/2\eta$ for
$\eta > 0$ and  $K(t,u) =  t \wedge u $ for $\eta =  0.$
In other words,  the Gateaux derivative is the integral operator
with kernel $K.$

If $\eta = 0$, $K(t,u) = t\wedge u,$  which is  the covariance of Brownian
motion, which has a well known Mercer series  expansion.   The eigenvalues
of $K$ are $\lam_n =  \left(\frac{2T}{(2n+1)\pi}\right)^2$ with corresponding
eigenfunctions $h_n(t) = \sin\left(\frac{(2n+1)\pi t}{2T}\right) $ for $n\geq 0.$ 
In particular, the largest eigenvalue is  $\lam_0 =  \left(\frac{2T}{\pi}\right)^2,$
and $\lam_0 \leq 1$ if and only if $T\leq  T_c(0) = \pi/2,$   where $T_c(\eta)$
is the critical time horizon for the appearance of multiple MFG equilibria.

Here is an upper bound on the maximum eigenvalue
of $K$ for $\eta > 0.$   The mappings $\calL_1$ and $\calL_2$ are
both bounded operators in the supremum norm:
$\|y\|_{\infty} \leq c(\eta,T)  \|x\|_{\infty},$
with operator norm $c(\eta,T) = \int_0^T e^{-2\eta t} dt = \frac{1- e^{-2\eta T}}{2\eta}.$
Thus, the Gateaux derivative is also a bounded
operator in the supremum norm with operator bound
$c^2(\eta,T).$\footnote{A somewhat tighter bound is given by
$\|\tilde x\|_{\infty} \leq \tilde c(\eta,T)  \|x\|_{\infty},$
where $\tilde{c}(\eta, T) = \max_t \int_0^T  K(t,s) ds$, but the
expression for $\tilde{c}(\eta, T)$ is complicated.}
Hence, if $\eta \geq 1/2$, the linearized mapping is a contraction
in the $L^{\infty}$ norm for all $T > 0.$   If $\eta < 1/2$ it is a
contraction if $T$ is small enough that $ \frac{1- e^{-2\eta T}}{2\eta} < 1.$

For $\eta > 0$ we conjecture the largest eigenvalue of $K$ is greater than one precisely
when there is a nonzero MFG equilibrium, namely, when
$T > T_c(\eta) \triangleq  \frac{\pi - \arccos(2\eta)}{\sqrt{1-4\eta^2}}.$
We numerically found the largest eigenvalue of the matrix approximation of the
kernel,  $(K(iT/n,jT/n)T/n )_{i, j \in [n]}$ for $n=10^3$
for $\eta \in (0, 0.499)$ and $T$ near $T_c,$ and the calculations match
the conjecture well.

\bibliographystyle{plain}
\bibliography{/Users/brucehajek/Documents/Papers/BIBS/Games_Auctions_ML}

\appendices

\section{Proof of Proposition \ref{prop:FLMP_MFG}}   \label{FLMP_MFG_proof}

This section proves Proposition \ref{prop:FLMP_MFG}, that
if $\eta > 0,$  FLMP trajectories are mean field game equilibria.
The proof is given after some initial notation is given and two lemmas are proved.
Let  $(\theta_t)_{0\leq t \leq T}$ be an FLMP trajectory and let $(i^N(0), n^N(0))_{N\geq 1}$ be a corresponding
sequence of initial conditions as in the definition of FLMP trajectory. 
For $N\geq 1$,   let $( (i(t), n(t)) : 0 \leq t \leq T)$ denote the controlled Markov process for $N+1$
players resulting for initial state $(i^N(0), n^N(0)),$ when all players use the unique policy $(\alpha^*(i,n,t))$
for the Markov perfect equilibrium for $N+1$ players.  
Since the functions $f(i,\theta, t)$ and $\psi(i,\theta)$ are bounded,   for $T$ fixed, the cost to go functions $u(i,n,t)$ determined
by the HJB equations  \eqref{eq:HJB_N+1}- \eqref{eq:HJB_N+1_bc} are uniformly bounded
for all $N, i, n,$ and $t\in [0,T].$     Therefore, the policy $\alpha^*$, determined by \eqref{eq:HJB_N+1_policy},
is also uniformly bounded.     Select $\Gamma_1$ such that  $(\alpha^*(i,n,t)) \leq \Gamma_1$ for all $N, i,n,$ and $t\in [0,T].$
Suppose also that $\Gamma_1$ is large enough that $\alpha(i,t) \leq \Gamma_1$ for all $i, t$ for any
decentralized policy $\alpha(i,t)$ resulting by responding to a deterministic collective mass trajectory.

Consider the following variation of the Markov perfect equilibrium.   Suppose
the reference player switches from using $\alpha^*$ to some other policy, $\beta^*(i,n,t),$
such that $\beta^*(i,n,t) \leq \Gamma_1$ and $t \mapsto \beta^*(i,n,t)$ is continuous for all $(i,n).$
Let $P$ denote the original probability distribution
for the process $(i(t),n(t))_{0\leq t \leq T}$ and let $\tilde P$ denote  the probability distribution of
$(i(t),n(t))_{0\leq t \leq T}$  when the reference player switches to policy $\beta^*.$

\begin{lemma} (Insensitivity of FLMP trajectory to one player switching policies)  \label{lmm:insensitivity}
The following holds for any $\eps>0$, 
\begin{align} \label{eq:perturbed_fluid_lim}
\lim_{N \to \infty}  \tilde\P\left[   \bigg|  \frac{n^N(t)}{N+1} - \theta_t \bigg|  < \eps \mbox{ for } 0 \leq t \leq T  \right] =1. 
\end{align}
\end{lemma}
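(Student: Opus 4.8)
Proof proposal for Lemma \ref{lmm:insensitivity}.

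\textbf{Approach.}
I would prove this by a Girsanov-type change of measure for the pure-jump Markov process $(i(t),n(t))$, exploiting that $\eta>0$ keeps the reference player's jump rate bounded away from $0$. Only the reference player's transition rate differs between $P$ and $\tilde P$ (it changes from $\alpha^*(i,n,t)+\eta$ to $\beta^*(i,n,t)+\eta$), while the compensators $\gamma^\pm$ governing $n(t)$ retain the same functional form. Hence the Radon--Nikodym derivative $L_T=\frac{d\tilde P}{dP}\big|_{\mathcal F_T}$ is the exponential martingale of the reference player's counting process: writing $J$ for the number of reference-player jumps and $\alpha^*_s,\beta^*_s$ for the rates evaluated along the trajectory, $L_T=\exp\bigl(\int_0^T\log\tfrac{\beta^*_s+\eta}{\alpha^*_s+\eta}\,dJ_s-\int_0^T(\beta^*_s-\alpha^*_s)\,ds\bigr)$. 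Since $0\le\alpha^*,\beta^*\le\Gamma_1$, both rates lie in $[\eta,\Gamma_1+\eta]$, so the first integrand is bounded in absolute value by $C_1:=\log\bigl((\Gamma_1+\eta)/\eta\bigr)$ and the second integral by $\Gamma_1 T$; therefore $e^{-C_1 J_T-\Gamma_1 T}\le L_T\le e^{C_1 J_T+\Gamma_1 T}$. (Continuity of $t\mapsto\beta^*(i,n,t)$ and the uniform bound $\Gamma_1$ are exactly what make the switched process well-defined and $L$ a genuine martingale, which is why they were built into the setup.)

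\textbf{Tightness of $J_T$.}
The next step is to note that the number of reference-player jumps in $[0,T]$ is tight uniformly in $N$ under \emph{both} measures: under $P$ its intensity is at most $\Gamma_1+\eta$, and under $\tilde P$ it is $\beta^*_s+\eta\le\Gamma_1+\eta$, so in each case $J_T$ is stochastically dominated by a $\mathrm{Poisson}((\Gamma_1+\eta)T)$ random variable. Consequently, for every $\delta>0$ there is an integer $m=m(\delta)$ with $P[J_T>m]<\delta$ and $\tilde P[J_T>m]<\delta$ for all $N$.

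\textbf{Main estimate.}
Let $A_N=\{\,|n^N(t)/(N+1)-\theta_t|<\eps \text{ for } 0\le t\le T\,\}$. Since $(\theta_t)$ is an FLMP trajectory with the given initial conditions, the definition gives $P[A_N^c]\to 0$. Fix $\delta>0$ and take $m$ as above. Then, using the deterministic bound $L_T\le e^{C_1 m+\Gamma_1 T}$ on $\{J_T\le m\}$ and $\tilde P[B]=\E_P[L_T\mathbf 1_B]$,
\[
\tilde P[A_N^c]\ \le\ \tilde P\bigl[A_N^c\cap\{J_T\le m\}\bigr]+\tilde P[J_T>m]\ \le\ e^{C_1 m+\Gamma_1 T}\,P[A_N^c]+\delta .
\]
Letting $N\to\infty$ with $m$ fixed kills the first term, so $\limsup_N\tilde P[A_N^c]\le\delta$; as $\delta>0$ was arbitrary, $\tilde P[A_N^c]\to 0$, which is \eqref{eq:perturbed_fluid_lim}.

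\textbf{Main obstacle.}
The delicate point, and the reason $\eta>0$ is needed, is that $L_T$ is not uniformly bounded: it is controlled only on the event that the reference player jumps few times, so the naive $\tilde P[A_N]=\E_P[L_T\mathbf 1_{A_N}]\ge(\inf L_T)\,P[A_N]$ bound is useless. What repairs this is the splitting on $\{J_T\le m\}$ versus $\{J_T>m\}$ together with the tightness of $J_T$ \emph{under $\tilde P$} as well as under $P$. When $\eta=0$ the reference player's rate under $\alpha^*$ can vanish, $C_1=\infty$, and $L_T$ is no longer comparable to a constant on $\{J_T\le m\}$, which is precisely where the argument collapses.
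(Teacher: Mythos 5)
Your proof is correct, and it shares the paper's starting point --- the explicit Girsanov-type likelihood ratio for the reference player's counting process, with $\eta>0$ guaranteeing both absolute continuity and control of the log-intensity ratio --- but the mechanism you use to transfer the $P$-limit to $\tilde P$ is genuinely different. The paper establishes a uniform-in-$N$ moment bound $E_P\bigl[(d\tilde P/dP)^p\bigr]\le\Gamma_2$, by factoring $(d\tilde P/dP)^p$ into a further likelihood ratio $d\tilde{\tilde P}/dP$ times a deterministically bounded exponential, and then invokes a H\"older-inequality lemma to obtain the quantitative comparison $\tilde P(A)\le \Gamma_2^{1/p}\,P(A)^{1/q}$, applied with $A=A_N^c$. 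You instead truncate on the reference player's jump count: on $\{J_T\le m\}$ you bound $L_T\le e^{C_1 m+\Gamma_1 T}$ deterministically (valid precisely because $\eta>0$ makes $C_1=\log((\Gamma_1+\eta)/\eta)$ finite), and you control $\tilde P[J_T>m]$ by Poisson domination, yielding $\tilde P(A_N^c)\le e^{C_1 m+\Gamma_1 T}P(A_N^c)+\delta$ uniformly in $N$. Your route is more elementary --- no $L^p$ computation, no auxiliary measure, no H\"older --- and it isolates cleanly where $\eta>0$ enters (and hence where both arguments break at $\eta=0$); the paper's route buys a $\delta$-free, event-uniform estimate $\tilde P(A)\le c\,P(A)^{1/q}$, which is slightly stronger than your asymptotic transfer, though both suffice for the lemma. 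Two minor points: only tightness of $J_T$ under $\tilde P$ is actually used (your bound under $P$ is harmless but idle), and the Poisson domination deserves a one-line justification, e.g.\ the jump intensity of the reference player under $\tilde P$ is bounded by $\Gamma_1+\eta$, so a thinning/coupling argument dominates $J_T$ by a rate-$(\Gamma_1+\eta)$ Poisson count on $[0,T]$.
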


\begin{lemma}  \label{lmm:PP}
Let $P$ and $\tilde P$ be probability distributions on the same measurable space $(\Omega, \calF)$
such that $\tilde P <<P$  (i.e. $\tilde P$ is absolutely continuous with respect to $P$) and let $\frac{d\tilde P}{dP}$
denote the Radon-Nikodym derivative.   Suppose
$E_P\left[   \left( \frac{d\tilde P}{dP}   \right)^p     \right]^{1/p}  \leq c$ for some $p > 1$ and $c.$
Let $q>1$ be such that $\frac 1 p + \frac 1 q = 1.$
Then for any event $A$,   $\tilde P(A) \leq  c P(A)^{1/q}.$
\end{lemma}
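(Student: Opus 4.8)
The statement is a one-line consequence of H\"older's inequality; the only thing to set up is the standard identity expressing a $\tilde P$-probability as a $P$-expectation of the Radon--Nikodym derivative. So the plan is: (i) write $\tilde P(A) = E_P\!\left[\ind_A \,\frac{d\tilde P}{dP}\right]$, which is the defining property of the Radon--Nikodym derivative applied to the indicator of $A$ (this uses $\tilde P \ll P$); (ii) apply H\"older's inequality with the conjugate pair $(p,q)$ to the product $\ind_A \cdot \frac{d\tilde P}{dP}$; (iii) simplify the two resulting factors and invoke the hypothesis.

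\textbf{Key steps in order.} First, since $\tilde P \ll P$, for any $A \in \calF$ we have $\tilde P(A) = \int_A \frac{d\tilde P}{dP}\, dP = E_P\!\left[\ind_A \,\frac{d\tilde P}{dP}\right]$. Second, by H\"older's inequality with exponents $p$ and $q$ (so $\tfrac1p + \tfrac1q = 1$),
\[
E_P\!\left[\ind_A \,\frac{d\tilde P}{dP}\right] \;\le\; E_P\!\left[\left(\frac{d\tilde P}{dP}\right)^{\!p}\right]^{1/p} \; E_P\big[\ind_A^{\,q}\big]^{1/q}.
\]
Third, $\ind_A^{\,q} = \ind_A$ (it is $\{0,1\}$-valued), so $E_P[\ind_A^{\,q}]^{1/q} = P(A)^{1/q}$, while the first factor is $\le c$ by assumption. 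Combining the three displays gives $\tilde P(A) \le c\, P(A)^{1/q}$, as claimed.

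\textbf{Anticipated obstacle.} There is essentially none: the argument is routine measure theory. The only point requiring a word of care is the justification of step (i) when $A$ has infinite $P$-measure or $\frac{d\tilde P}{dP}$ fails to be in $L^p$ --- but the former does not arise since $P$ is a probability measure, and the latter is ruled out by the moment hypothesis $E_P[(d\tilde P/dP)^p]^{1/p} \le c < \infty$, which in particular makes H\"older applicable. Thus the proof is immediate once these observations are recorded.
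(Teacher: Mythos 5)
Your proof is correct and is essentially identical to the paper's: both write $\tilde P(A)=\int_\Omega \identityf{A}\,\frac{d\tilde P}{dP}\,dP$ and apply H\"older's inequality with the conjugate exponents $(p,q)$, using $\identityf{A}^q=\identityf{A}$ and the moment bound to conclude $\tilde P(A)\le c\,P(A)^{1/q}$. No further comment is needed.
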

\begin{proof}[Proof of Lemma \ref{lmm:PP}]
By H\"{o}lder's inequality,
\begin{align*}
\tilde P(A)  & = \int_{\Omega}  \frac{d\tilde P}{dP} \identityf{A}   dP   \\
 & \leq  c  \left( \int_{\Omega} \identityf{A}^q dP  \right)^{1/q} = c P(A)^{1/q}
\end{align*}
\end{proof}

\begin{proof}[Proof of Lemma \ref{lmm:insensitivity} ]
Since $P$ and $\tilde P$ only differ by the change in the policy for player 1,  the Radon-Nikodym derivative $\frac{d\tilde P}{dP}$
can be written explicitly as follows.   Let $(Y_t)_{0\leq t \leq T}$ denote the number of jumps of the state of the reference
player during $[0,t].$   Then, by standard theory of  change of probability measure  for point processes (Girsanov type result
for point processes, see \cite{VanSchuppenWong74}, Theorem 4.1 for example),  $\tilde P << P$ and the
Radon-Nikodym derivative is given by
\begin{align*}
\frac{d\tilde P}{dP} = \exp\left( \int_0^T  \ln\left(\frac{\beta^*+ \eta}{\alpha^* + \eta} \right) dY_t   - \int_0^T  (\beta^*-\alpha^*)  dt \right)
\end{align*}
where $\beta^*$ is short for $\beta^*(i(t-),n(t-),t),$ $\alpha^*$ is short for  $\alpha^*(i(t-),n(-),t)$ and $\eta$ is
the fixed positive background jump rate.

Note that for $p > 1$,  the expression for  the Radon-Nikodym derivative to the $p^{th}$ power can be written
as a product
{\small
\begin{align*}
\left(  \frac{d\tilde P}{dP}\right)^p  
=  \frac{ d\tilde{\tilde P} }{dP}  \eexp ^{  \int_0^T  ( \beta^*+ \eta)^p - ( \alpha^* + \eta )^p - p(\beta^* - \alpha^*) dt} 
\leq \frac{ d\tilde{\tilde P} }{dP} \Gamma_2
\end{align*}
}where $\tilde{\tilde P}$ is a probability measure corresponding to a similar Radon-Nikodym derivative
with a factor $p$ in front of the log term, and
 $\Gamma_2 = \exp\left[ T \left(   (\Gamma_1 + \eta)^p + p\Gamma_1   \right)  \right].$
 Thus,   $E_P\left[  \left(  \frac{d\tilde P}{dP}\right)^p   \right] \leq \Gamma_2.$
 Lemma  \ref{lmm:insensitivity}  thus follows from Lemma \ref{lmm:PP} with $A$ equal to the
complement of the event in \eqref{eq:perturbed_fluid_lim}.
 \end{proof}

 \begin{proof}[Proof of Proposition \ref{prop:FLMP_MFG}]
  Consider the Markov perfect equilibrium for large $N.$ 
 In view of Lemma   \ref{lmm:insensitivity}, if the reference player
 deviates from using $\alpha^*$, the normalized process $n(t)/N$
 for the rest of the population still follows $\theta$ arbitrarily
 closely as $n\to\infty.$   Thus, an asymptotically optimal policy
 for the reference player to switch to is the optimal response to
deterministic collective mass trajectory $\theta.$  Furthermore, it
implies $u(n,i,t) - u(i,t)$ converges  to zero uniformly in $n$ and
$t \in [0,T]$,  where $u(n,i,t)$ is associated with the $N+1$ player
MP equilibrium, and $u(i,t)$ is the cost-to-go for the single reference
player responding to the deterministic mass trajectory  $\theta.$
It follows that all players in the  $N+1$ game are asymptotically
effectively using the same policy as the  alternate policy of the reference player.
(in other words,  $ ( u(i,n,t)-u(1-i,n,t) )_+ \approx ( u(i,t)-u(1-i,t) )_+ ).$
Thus, the corresponding  fluid limit is the same as the mean limit
for the reference player  with random initial state equal to 0
with probability  $n^M(0)/n.$
 \end{proof}

\iftoggle{short}{}{
 \section{The uniform law of large numbers}  \label{sec:unif_law_large_numbers}
 
 Theorem 7.4 of  \cite{GineZinn} is repeated here for convenience.
 
\begin{proposition}
 Let $(X_t : 0\leq t \leq T)$ be a centered, stochastically continuous
 uniformly bounded random process whose trajectories are right
 continuous and have left limits.   Assume for some $c> 0$, some
 nondecreasing function $F \in D[0,1]$ and for all $s,t \in [0,1],$
 $E[ |X_t -X_s| \leq |F(t) -F(s)|.$   Then $X \in CLT$  in $(D([0,1], \| \cdot  \|_\infty).$
 \end{proposition}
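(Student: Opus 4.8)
Since this statement is quoted essentially verbatim as Theorem~7.4 of \cite{GineZinn}, the natural route is to invoke that reference; but if one wanted to reconstruct the argument, the plan would be to run the standard two-step program for a functional central limit theorem in the (non-separable) space $(D([0,1]),\|\cdot\|_{\infty})$. Writing $S_n = n^{-1/2}\sum_{k=1}^{n} X^{(k)}$ for the normalized sum of i.i.d.\ copies of $X$, one must show (i) the finite-dimensional distributions of $S_n$ converge, and (ii) $\{S_n\}$ is asymptotically equicontinuous with respect to a suitable pseudometric on $[0,1]$. Step (i) is immediate: since $X$ is centered and uniformly bounded, say $|X_t|\le M$ almost surely, every vector $(X_{t_1},\dots,X_{t_\ell})$ has a finite covariance matrix $\Gamma$ with $\Gamma_{ij}=\mathrm{Cov}(X_{t_i},X_{t_j})$, and the classical multivariate Lindeberg--L\'evy CLT gives $(S_n(t_1),\dots,S_n(t_\ell)) \Rightarrow N(0,\Gamma)$.

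For step (ii) I would work with the intrinsic $L^2$ pseudometric. Combining the increment bound $\E|X_t-X_s|\le c\,|F(t)-F(s)|$ with the a.s.\ bound $|X_t-X_s|\le 2M$ gives
\[
\E|X_t-X_s|^2 \;\le\; 2Mc\,|F(t)-F(s)| \;=:\; \rho(s,t)^2 .
\]
Because $F$ is c\`adl\`ag and nondecreasing it is bounded with finite total variation $F(1)-F(0)$, so $([0,1],\rho)$ is totally bounded with covering numbers $N(\varepsilon,[0,1],\rho)=O(\varepsilon^{-2})$; in particular Dudley's entropy integral $\int_0^{\infty}\sqrt{\log N(\varepsilon,[0,1],\rho)}\,d\varepsilon$ is finite. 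The same estimate shows that the centered Gaussian process with covariance $\Gamma(s,t)=\mathrm{Cov}(X_s,X_t)$ admits a version with $\rho$-uniformly-continuous sample paths, hence a version with paths in $D([0,1])$, which is the candidate limit.

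The heart of the matter is then a maximal inequality for $\sup_{\rho(s,t)<\delta}|S_n(t)-S_n(s)|$: the plan is to symmetrize with Rademacher signs and apply a chaining bound (Dudley's inequality, conditionally on the data) to obtain $\limsup_n \E^{*}\big[\sup_{\rho(s,t)<\delta}|S_n(t)-S_n(s)|\big] \lesssim \int_0^{\delta}\sqrt{\log N(\varepsilon,[0,1],\rho)}\,d\varepsilon$, which tends to $0$ as $\delta\downarrow 0$. Together with (i) this yields asymptotic tightness of $\{S_n\}$ in $(D([0,1]),\|\cdot\|_{\infty})$ and identifies the weak limit as the Gaussian process above, i.e.\ $X\in CLT$ in $(D([0,1]),\|\cdot\|_{\infty})$. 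I expect the main obstacle to be making the chaining rigorous in the non-separable space $(D,\|\cdot\|_{\infty})$: one must pass to outer expectations and measurable majorants, and control the oscillation near the (at most countably many) jump points of $F$, where both $S_n$ and the limit process carry matching jumps so the supremum norm is not disrupted. This bookkeeping is exactly what is packaged inside \cite{GineZinn}.
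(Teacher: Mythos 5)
The paper offers no proof of this statement at all: it is imported verbatim as Theorem~7.4 of \cite{GineZinn}, so your primary move --- invoking that reference --- is exactly what the paper does, and for the purposes of this paper that is sufficient. One caution about your reconstruction sketch, though: the step you call the heart of the matter is also where it is not yet a proof. After symmetrization, Dudley's inequality applied ``conditionally on the data'' controls the chaining sum through the \emph{random} (empirical) pseudometric $\rho_n(s,t)^2=n^{-1}\sum_k (X^{(k)}_t-X^{(k)}_s)^2$, not the population pseudometric $\rho$, and finiteness of the entropy integral for $\rho$ alone is in general not sufficient for a CLT in $\ell^\infty$/$D[0,1]$; one needs either a uniform-entropy condition or a bracketing argument. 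The actual Gin\'e--Zinn argument exploits precisely the structure you have available --- monotonicity of $F$, the $L^1$ increment bound, uniform boundedness and c\`adl\`ag paths --- to build brackets from a partition of $[0,1]$ at points where $F$ increases by at most $\epsilon$ (an Ossiander-type $L^2$-bracketing CLT), rather than plain Dudley chaining with respect to $\rho$. So if you were to carry out the sketch, you should replace the symmetrization-plus-Dudley step by a bracketing maximal inequality; as written, that step would not go through.
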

 
 An implication of this theorem is that if all players use the
 same decentralized policy $\alpha(i,t)$ (assumed to be bounded
 and measurable in $t$) and if the initial conditions satisfy $\frac{n(0)}{N} \to \overline \theta$
 for some $\overline \theta  \in [0,1],$  then as $n\to\infty,$   the population
 average converges to $(\theta_t)$ in probability in the supremum norm, where $(\theta_t)$ is determined
 by the Kolmogorov forward equation
  \begin{align*}
&\dot{\theta_t} = (1-\theta_t)( \alpha(0,t) + \eta)  -   \theta_t  (\alpha_1(t)  + \eta )   \\
&   \theta_0 = \overline{\theta}  ~~~~~~~~~~~~~~\mbox{boundary condition at 0} 
\end{align*}
 
 Therefore, the following are equivalent for a trajectory $(\theta_t)$:
 \begin{enumerate}[(a)]
\item Let $\alpha^*$ denote the optimal response policy for a single player
 in response to $\theta.$  In other words, $\alpha(i,t) = (u(i,t) - u(1-i,t))_+$ where $(u(i,t))$ is determined
 by  \eqref{eq:HJB_one_player}- \eqref{eq:HJB_one_player_init}.  Then for any $\epsilon > 0$ and any sequence of finite
 player games with $\frac{n(0)}{N} \to \theta_0$,  
  the strategy profile of all players using $\alpha^*$ is an $\epsilon$-Nash equilibrium
  for sufficiently large $N$. 
  \item $(\theta_t)$ is the population trajectory of a MFG equilibrium.
  \end{enumerate}
  
  \section{Monotonicity of period with amplitude}  \label{sec:period_monotonicity}
  
 Consider the follow the crowd dynamics \eqref{eq:follow_MFGxy}, rewritten here for convenience:
\begin{align*} 
\begin{array}{l}
 \dot x   = y-x|y| - 2\eta x  \\
\dot y =  - x + \frac12 y|y| + 2\eta y   
\end{array}
\end{align*}
  
 From \eqref{eq:follow_MFGxy} we find for all $x,y,$
\begin{align*}
\ddot y &=  -\dot x  +  |y| \dot y  + 2\eta  \dot y \\
  &= \frac 1 2 y^3 + 3\eta y|y| + (4\eta^2 - 1) y.
\end{align*}
Equivalently, writing $v= \dot y$, yields
\begin{align}   \label{eq:follow_yv}
\begin{array}{l}
\dot y   = v   \\
\dot v   = \frac 1 2 y^3  + 3\eta y|y| + (4\eta^2 - 1) y.
\end{array}
\end{align} 
The motion \eqref{eq:follow_yv} admits the Hamiltonian $H(y,v) = \frac 1 2 v^2 - \frac 1 8 y^4  - \eta |y|^3  - \frac{4\eta^2-1}{2} y^2.$
If $\eta < 1/2$ then $H(y,v)$ is convex near the origin.  Letting $\varphi = \arctan\left( \frac{y}{v} \right)$ we find
\begin{align*}
\dot \varphi (y,v) =  \frac{\dot v  y - v \dot y}{y^2 + v^2}  =   - 1  +  \frac{ 4\eta^2 y^2  + \frac 1 2 y^4  + 3 \eta |y|^3 }{y^2 + v^2}
\end{align*}
Note that $\dot \varphi$ is increasing in $|y|$  for any fixed ratio of $v$ to $y$  (decreasing angular speed).
Hence, the period of the periodic trajectories increases with amplitude.

\section{Linear asymptotic stability for symmetric follow the crowd example}  \label{app:linear asymptotic stability}

A definition of {\em linear asymptotic stability} was introduced in (\cite{YinMehtaMeynShanbhag12}, Section IV) for a constant in time
solution to the infinite horizon long term average cost mean field game.   We translate that definition to our setting.
Roughly speaking,  linear asymptotic stability is a variation, based on linearization, of the asymptotic stability properties
delineated in Section  \ref{sec:infinite_horizon_transient}.
\begin{definition}
Suppose $(\hat x, \hat y )$ is an equilibrium point for the ode  \eqref{eq:follow_MFGxy}.
Seeking solutions of the form $(\tilde x , \tilde y) =  (\hat x, \hat y ) + \epsilon (x,y) +  o(\epsilon)$,
we obtain a linear initial value problem for $(x,y)$ by linearizing \eqref{eq:follow_MFGxy} about $(\hat x, \hat y ).$
The point $(\hat x, \hat y )$ is said to be {\em linearly asymptotically stable} if for any initial perturbation $x_0 \in \reals,$   there exists
a unique solution $(x_t, y_t)_{t\geq 0}$ to the linearized equations  (with the given initial condition for $x$, some initial condition for $y$,
and satisfying the $L^2$ constraint $\int_0^{\infty} \|x_s - \hat x\|^2 ds < \infty$) and, furthermore,  $\lim_{t\to\infty} x_t = \hat x.$
\end{definition}

With the definition in place we prove the following proposition.
\begin{proposition}  \label{prop:lin_asymp_stab_00}
The equilibrium point $(0,0)$ is linearly asymptotically stable if and only if $\eta > \eta_c = 1/2.$
The equilibrium points $\pm \hat P$ are linearly asymptotically stable if and only if $0\leq \eta < 1/2.$
\end{proposition}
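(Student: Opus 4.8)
\emph{Proof plan.} The plan is to linearize \eqref{eq:follow_MFGxy} about each equilibrium and read linear asymptotic stability off the eigenvalues of the $2\times2$ coefficient matrix. Since \eqref{eq:follow_MFGxy} is Hamiltonian with Hamiltonian \eqref{eq:Hxy}, every linearization $\dot z = Mz$ has $\mathrm{tr}(M)=0$ (the Jacobian of $(H_y,-H_x)$ is $\left(\begin{smallmatrix}H_{xy}&H_{yy}\\-H_{xx}&-H_{xy}\end{smallmatrix}\right)$), so the eigenvalues are $\pm\sqrt{-\det M}$ and only three cases occur: a \emph{saddle} ($\det M<0$), a \emph{center} ($\det M>0$), or a degenerate case ($\det M=0$). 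In the saddle case the one-dimensional stable eigenspace $E^-$ consists of exponentially decaying solutions, while any solution with a nonzero unstable component grows and is therefore not square-integrable; hence, provided $E^-$ is not a vertical line, each prescribed $x_0$ is the initial $x$-value of exactly one square-integrable solution (the one in $E^-$), and that solution converges to the equilibrium, so the equilibrium is linearly asymptotically stable. In the center case every nonzero solution is a bounded, non-decaying oscillation, hence not square-integrable, so for $x_0\neq0$ no admissible solution exists and the equilibrium is not linearly asymptotically stable. The degenerate case is settled by an explicit computation of $e^{Mt}$. Thus the whole statement reduces to evaluating $\det M$ at each equilibrium (and, in the saddle cases, checking that $E^-$ is non-vertical).

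For the origin the linearization is \eqref{eq:follow_MFGxy_linearized} (the terms $x|y|$ and $\tfrac12 y|y|$ of \eqref{eq:follow_MFGxy} are quadratic in $(x,y)$ and drop out), with matrix $M_0=\left(\begin{smallmatrix}-2\eta&1\\-1&2\eta\end{smallmatrix}\right)$, so $\det M_0=1-4\eta^2$. For $\eta>\eta_c=\tfrac12$ this is negative; $M_0$ has no vertical eigenvector, so $(0,0)$ is linearly asymptotically stable. For $\eta<\tfrac12$, $\det M_0>0$ gives a center, so $(0,0)$ is not linearly asymptotically stable. At $\eta=\tfrac12$, $M_0^2=0$ and $e^{M_0t}=I+tM_0$; the only solution that does not grow linearly has $y_0=x_0$ and is then $x_t\equiv x_0$, which is not square-integrable on $[0,\infty)$ unless $x_0=0$, so $(0,0)$ is not linearly asymptotically stable at the threshold either. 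This yields the first assertion.

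For the nonzero equilibria $\pm\overline P$ of \eqref{eq:resting_points} (the $\pm\hat P$ of the statement) recall that they exist only for $0\le\eta<\tfrac12$: indeed $\overline y=\sqrt{2+\eta^2}-3\eta>0$ exactly on that range, $\overline P\to(0,0)$ as $\eta\to\tfrac12$, and for $\eta\ge\tfrac12$ the origin is the unique equilibrium, so the ``only if'' direction is immediate. On $[0,\tfrac12)$ the point $\overline P$ lies in $\{y>0\}$, where $|y|=y$ and \eqref{eq:follow_MFGxy} is smooth, with Jacobian $M_P=\left(\begin{smallmatrix}-\overline y-2\eta&1-\overline x\\-1&\overline y+2\eta\end{smallmatrix}\right)$ and $\det M_P=(1-\overline x)-(\overline y+2\eta)^2$. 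Substituting $1-\overline x=\eta^2+\eta\sqrt{2+\eta^2}$ and $\overline y+2\eta=\sqrt{2+\eta^2}-\eta$ gives $\det M_P=3\eta\sqrt{2+\eta^2}-\eta^2-2$; squaring the equivalent inequality $3\eta\sqrt{2+\eta^2}<2+\eta^2$ and dividing by $2+\eta^2$ reduces $\det M_P<0$ to $8\eta^2<2$, i.e. to $\eta<\tfrac12$ (with $\det M_P=0$ at $\eta=\tfrac12$, consistent with coalescence into the degenerate origin). So $M_P$ is a saddle on $[0,\tfrac12)$; a vertical eigenvector would force $1-\overline x=0$, which happens only at $\eta=0$, and even there the vertical eigenvector is the unstable one, so $E^-$ is always non-vertical. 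Hence for each $x_0$ the square-integrability constraint picks out a unique solution, which converges to $\overline P$, and the symmetry $(x,y)\mapsto(-x,-y)$ of \eqref{eq:follow_MFGxy} gives the same for $-\overline P$. This yields the second assertion.

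The main obstacle is the sign analysis of $\det M_P$: after substituting the closed forms \eqref{eq:resting_points} one must confirm that the determinant changes sign at exactly $\eta=\tfrac12$, which comes down to the clean inequality $8\eta^2<2$ after squaring and dividing by $2+\eta^2$. A secondary point requiring care is the uniqueness half of ``saddle $\Rightarrow$ linearly asymptotically stable'': one must check that the unstable mode is not invisible to the admissibility constraint. The only delicate instance is $\eta=0$, where the unstable eigenvector of $M_P$ is vertical; this is why the square-integrability condition must be read as a condition on the full deviation $(x-\overline x,\,y-\overline y)$ (equivalently on $y-\overline y$) rather than on $x-\overline x$ alone, and under that reading the argument goes through.
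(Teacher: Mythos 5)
Your proof is correct and follows essentially the same route as the paper: linearize about each equilibrium, exploit the trace-free Hamiltonian structure so the eigenvalues are $\pm\sqrt{-\det M}$, and let the $L^2$ admissibility constraint select the stable eigendirection in the saddle case and rule out all nonzero solutions in the center and degenerate ($\eta=1/2$) cases. You are in fact somewhat more careful than the paper at $\pm\overline P$: you retain the Jacobian entry $H_{yy}=1-\overline x$ (the paper's displayed matrix writes $1$ there, which is only valid at the origin) and verify that $\det M_P<0$ reduces exactly to $\eta<1/2$, you dispose of the vacuous ``only if'' direction by noting $\pm\overline P$ exist only for $\eta<1/2$, and you flag the $\eta=0$ vertical unstable eigenvector and the attendant reading of the $L^2$ constraint, points the paper's proof passes over.
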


\begin{proof}   For an equilibrium point $(\hat x, \hat y)$, we have $H_x(\hat x, \hat y) =H_y(\hat x, \hat y) = 0$ and
\begin{align*}
H_y(\hat x + \epsilon x,\hat y + \epsilon y)   =  \epsilon H_{xy}(\hat x, \hat y) x  + \epsilon H_{yy}(\hat x, \hat y) y + o(\epsilon)  \\
H_x(\hat x + \epsilon x,\hat y + \epsilon y)   =  \epsilon H_{xx}(\hat x, \hat y) x  + \epsilon H_{xy}(\hat x, \hat y) y + o(\epsilon).  \\
\end{align*}
So the linear initial value problem for $(x,y)$ can be written as
\begin{align}  \label{eq:xdotAx}
\binom{\dot x}{\dot y} = A \binom{x}{y}
\end{align}
where
\begin{align*}
 A = \left( \begin{array}{cc} H_{xy} &  H_{yy} \\  -H_{xx} & -H_{xy} \end{array} \right)\bigg|_{\hat x, \hat y}  
\end{align*}
Since $\Tr(A)=0$ (so sum of eigenvalues is zero) and $\det(A) = \det( \calH (\hat x, \hat y))$ where $\calH$ is the Hessian of $H$:
\begin{align*}
\calH(\hat x, \hat y) = \left( \begin{array}{cc} H_{xx} &  H_{xy} \\  H_{xy} &  H_{yy} \end{array} \right)\bigg|_{\hat x, \hat y},
\end{align*}
the eigenvalues of $A$ are $\pm \sqrt{ - \det(\calH(\hat x, \hat y) )}.$  If $\det(\calH)  < 0$ then the eigenvalues are real valued and one is negative.
If $\det(\calH)  > 0$ the eigenvalues are purely imaginary.  
For the follow the crowd game with Hamiltonian given in \ref{eq:Hxy},
\begin{align*}
 A  = \left( \begin{array}{cc} -2\eta - |\hat y| &  1  \\  -1 &   2\eta + |\hat y|   \end{array} \right).
\end{align*}

Consider first the zero equilibrium, $(\hat x, \hat y) = (0,0),$  in which case  $A =\left( \begin{array}{cc}  - 2\eta    &   1 \\ -1 & 2\eta \end{array}\right) .$
This $A$ has eigenvalues $\pm \sqrt{4\eta^2 -1}$ and, for $\eta \geq 0.5$, corresponding eigenvectors $\binom{1}{2\eta \pm \sqrt{4\eta^2 -1}}.$
If $\eta > 0.5$ then the solutions to \eqref{eq:xdotAx} have the following form, for some constants $a$ and $b$,
\begin{align*}
&a \binom{1}{2\eta + \sqrt{4\eta^2 -1}}   \eexp^{ t \sqrt{4\eta^2 -1}} \\
&~~~~\qquad +  b \binom{1}{2\eta - \sqrt{4\eta^2 -1}}   \eexp^{- t \sqrt{4\eta^2 -1}}
\end{align*}
The initial condition for $x$ and the $L^2$ constraint are satisfied if and only if  $a=0$ and $b=x_0,$   and the resulting solution converges
to zero as $t \to \infty.$  Hence, the system is linearly asymptotically stable if $\eta > 1/2.$   If $\eta < 1/2$ then the two eigenvalues of $A$ are
purely imaginary, nonzero, and negatives of each other, so that all nonzero solutions to \eqref{eq:xdotAx} are periodic.    If $\eta = 1/2$ all solutions have
$x$ of the form $x_t = a + bt.$   So, combining the observations for $\eta < 1/2$  and $\eta=0.5$, we conclude that for $\eta \leq 1/2$ there are no
nonzero solutions satisfying the $L^2$ constraint.   So for $\eta \geq 1/2$ the zero equilibrium is not linearly asymptotically stable.

Now consider the equilibrium point $\pm \hat P$ and suppose $\eta < 1/2.$   Then, since $| \hat y | = \sqrt{2+\eta^2} - 3 \eta,$  we
find that $2\eta + |\hat y| = \sqrt{2+\eta^2} - \eta  > 1.$   Therefore, by the analysis for the zero equilibrium point with $2\eta$ replaced
by $2\eta + |y|$, we see that again $A$ has two real-valued eigenvalues of opposite sign, so the system is linearly asymptotically stable.
\end{proof}

Note that the eigenvalues $\pm \sqrt{4\eta^2 -1}$ for the equilibrium point $(0,0)$
have qualitatively the same graph as in Figure 2(b) of \cite{YinMehtaMeynShanbhag12},
with $R$ and $R_c$  replaced by $\eta$ and $\eta_c.$

\begin{remark}
Proposition \ref{prop:lin_asymp_stab_00} illustrates the notion of linear asymptotic stability for equilibrium points
of the infinite horizon average cost mean field game, introduced in \cite{YinMehtaMeynShanbhag12}.     The two state
Markov control problem we have considered is considerably simpler than the coupled oscillator problem considered
in  \cite{YinMehtaMeynShanbhag12}, so, as explained in Section \ref{sec:infinite_horizon_transient},
we could observe asymptotic stability properties of equilibrium points directly,
rather than considering the linearized dynamics.
\end{remark}

%
%



\section{Kernel for Gateaux derivative of $\calT$ for nonzero $x.$} \label{sec:op_follow}

We give an expression for the kernel of the Gateaux derivative along a nonzero $x$ trajectory in
case $\eta=0$  for follow the crowd cost function. 
Given $x$,  $\tilde x = T(x)$ is found by first finding $y$:
\begin{align} \left\{
\begin{aligned} \label{eq:y_op1}
-\dot y &= x - \frac12 y |y| \\
y_T &= 0, 
\end{aligned} \right.  
\end{align}
and then $\tilde x:$ 
\begin{align} \left\{
\begin{aligned} \label{eq:x_op2}
\dot {\tilde x} &= y-\tilde x |y| \\ 
\tilde x_0 &= x_0. 
\end{aligned} \right.
\end{align} 
Fix $\hat t \in (0,T)$ and $\eps > 0$ sufficiently small. 
Suppose $h(t) = \delta(t-\hat t)$. 
Let $x_\epsilon = x+\epsilon h$, $y_\epsilon = y + \epsilon  k+ o(\epsilon)$
and $\tilde x_\epsilon = x + \epsilon g + o(\epsilon).$
Let $y, y_\eps$ be the solution to \eqref{eq:y_op1} with the $x, x_\eps: [0,T] \to [-1,1]$ respectively. 
Then, linearizing the equations for $y$and $\tilde x$ yields 
\begin{align*}
- \dot k &= h  -  k |y|, \text{ with } k(T)=0  \\
\dot g & = k - |y| g  -  x \sgn(y)  k, \text{ with } g(0) =0 
\end{align*} 
so that
\begin{align*}
k(s) &= \int_s^T    e^{-\int_s^{\hat t} |y| dr} h(\hat t) d\hat t \\
g(t) &= \int_0^t  e^{-\int_s^t |y_r| dr}k(s) (1-\sgn(y_s)x_s) ds,
\end{align*} 
and the kernel of $\calT$ is thus given by
\begin{align*}
K(t,\hat t)= \int_0^{t \wedge \hat t} e^{-\int_s^t |y| dr} e^{-\int_s^{\hat t} |y| dr} (1-\sgn(y_s)x_s)ds. 
\end{align*}
If $y\geq 0$ over $[0,T]$ then $(1-\sgn(y_t) x_t) = e^{-\int_0^t |y| dr},$ yielding:
\begin{align*}
K(t,\hat t)=\int_0^{t \wedge \hat t} e^{-\int_s^t |y| dr} e^{-\int_s^{\hat t} |y| dr} e^{-\int_0^s |y| dr} ds.
\end{align*}

\section{Avoid the crowd cost function}

In contrast to the follow the crowd game focused on in this
paper, the MFG equilibrium for the avoid the crowd game of
this section has a unique solution.   Suppose the cost per time spent in state $i$ is
$$ f(i,\theta)=|i -\theta  | =\left\{ \begin{array}{cl} \theta & i=0 \\
1-\theta & i=1
   \\
\end{array} \right.
$$
where $\theta$ is the fraction of other players in state 0. \\
The reduced dimension  MFG equations become
\begin{align}   \label{eq:avoid_MFGxy}
\begin{array}{l}
~~ \dot x   = y-x|y| - 2\eta x  \\
-\dot y = -x - \frac12 y|y| - 2\eta y   
\end{array}
\end{align}
with associated Hamiltonian
\begin{align} \label{eq:_avoid_Hxy}
H(x,y)= \frac{-x^2 - 4 \eta xy + y^2 - xy|y|}{2}.
\end{align}
Contour maps of $H$ are shown in  Fig.  \ref{fig:avoid_beta_Ham} for two values of $\eta.$
\begin{figure}[htbp]
\post{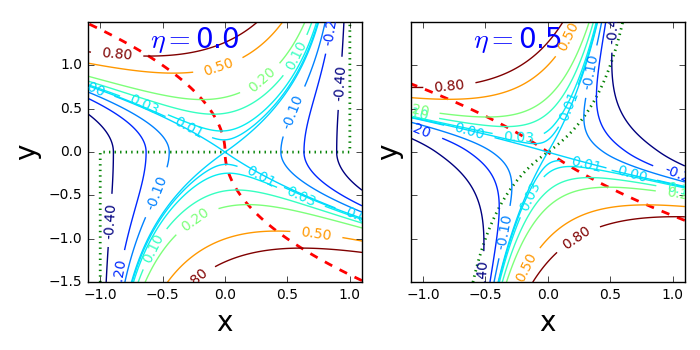}{8}
\caption{Avoid the crowd case.   Contour plot of $H$ for several values of $\eta.$   Dashed lines are the zero sets
of  $H_x,$ and dotted lines are the zero sets of $H_y.$  The intersections
of dotted and dashed lines are the critical points of $H$  (i.e. solutions to $\nabla H = 0.$)}
\label{fig:avoid_beta_Ham}
\end{figure}
We observe that $(0,0)$ is the unique critical point of $H,$ and
for any $x_0\in [-1,1]$ there exists a unique value of $y_0$ such that the solution of
the initial value problem with dynamics \eqref{eq:avoid_MFGxy} over $[0,\infty)$ is such that
$x_t \in [-1,1]$ for all $t.$    Furthermore, such solution converges to $(0,0)$. 
Also,  $\det  \mbox{Hess} H (0,0)  = -1 - 4\eta^2 < 0,$ and the unique equilibrium point $(0,0)$
of the infinite horizon average cost MFG is linearly asymptotically stable.

\section{On the difference of cost to go for $N+1$ players}   \label{sec:difference_N}

Recall that using  $y_t$ defined by  $y_t = u(1,t) -u(0,t)$ yielded a reduction from three to two dimensions
in the MFG equilibrium equations.   Let us see if a similar reduction occurs for the
Nash equilibrium equations for the $N+1$ player game.
For convenience we restate the HJB cost-to-go equations \eqref{eq:HJB_N+1} and \eqref{eq:HJB_N+1_policy}
for the reference player in the $N+1$ player game:
\begin{align}
&-\dot{u}(i,n,t)   =  f(i,n) -  \frac{((\alpha^*(i,n,t))^2}{2}\nonumber   \\
&~~~~ + \eta (u(1-i,n,t) - u(i,n,t))  \nonumber \\
& ~~~~ +  \gamma^+(i,n,t)(u(i,n+1,t)-u(i,n,t))  \nonumber  \\
&~~~~+ \gamma^- (i,n,t)(u(i,n-1,t)-u(i,n,t)),  \label{eq:HJB_N+1_repeated} \\
& u(i,n,T)=\psi(i,n),   \label{eq:HJB_N+1_bc}
\end{align}
where the corresponding control policy is
\begin{align}
\alpha^*(i,n,t)= ( u(i,n,t)-u(1-i,n,t) )_+  \label{eq:HJB_N+1_policy_repeated}.
\end{align}
Suppose all players use policy $\alpha^*$,  so $\beta = \alpha^*$ in the definition of $\gamma^{\pm}.$
Let $Y(n,t) = u(1,n,t) - u(0, n, t),$ $\delta f(n) =  f(1,n) - f(0,n),$
and $\delta \psi(n) = \psi(1,n)  -  \psi(0,n) .$    Using the facts
\begin{align*}
\alpha^*(1,n,t) &= (Y(n,t))_+  \\
\alpha^*(0,n,t) &= (-Y(n,t))_+  \\
 \gamma^+(i,n,t)&=(N-n) (\alpha^*(1, n+1-i,t) + \eta)    \\
&= (N-n)  (Y(n+1-i,t)_+ + \eta )   \\
 \gamma^-(i,n,t)&=  n (\alpha^*(0, n-i,t) + \eta)   \\
 &=  n (  (-Y(n-i,t))_+  + \eta)
\end{align*}
in \eqref{eq:HJB_N+1_repeated} yields
\begin{align*}
&-\dot Y(n,t)   \\
&= \delta f(n)  -  \frac{\left|Y(n,t)\right|  Y(n,t)}{2}   - 2\eta Y(n,t)   \\
& + (N-n) \eta (Y(n+1,t)-   Y(n,t))  \\
&  +  n\eta ( Y(n-1,t) - Y(n,t))  \\
& +  (N-n)  Y(n,t)_+  (u(1,n+1,t)-u(1,n,t))    \\
&+  n  (-Y(n-1,t))_+ (u(1,n-1,t)-u(1,n,t)),  \\
&  -  (N-n)  Y(n+1,t)_+ (u(0,n+1,t)-u(0,n,t))    \\
&-  n   (-Y(n,t))_+  (u(0,n-1,t)-u(0,n,t)), \\
& Y(n,T)=\delta \psi(n)
\end{align*}

The RHS is not a function of $Y$ alone.   However,
using $Y(n,t)_+ \approx Y(n+1,t)_+$ and  $(-Y(n-1,t))_+ \approx  (-Y(n,t))_+ $
yields the approximation:
\begin{align}
&-\dot Y(n,t)   \approx \delta f(n)  -  \frac{\left|(Y(n,t)\right|  Y(n,t)}{2}  - 2\eta Y(n,t)  \qquad \nonumber \\
& + (N-n)(Y(n,t)_+  + \eta)(Y(n+1,t)-   Y(n,t))  \nonumber \\
&+  n ( (-Y(n,t))_+ + \eta) (Y(n-1,t)-Y(n,t))  .   \label{eq:Y_approx}  \\
&Y(n,T)=  \delta \psi(n)   \label{eq:Y_approx_boundary}
\end{align}

\section{The MFG partial differential equation}

An interpretation of a mean field game Nash equilibrium $(u(i,t) , \theta_t)$ is that at each time $t$,
$u(i,t)$ is the cost to go for a reference player in state $i$, given that the fraction of players
in state 0 is $\theta_t.$   That picture can be embedded into a larger picture.   Bt taking a limit
of the HJB equations for $N+1$ players as $N\to\infty,$   we can derive a PDE for $(U(i,\theta, t))$
such that $U(i,\theta, t)$ is the cost-to-go for a reference player in state $i$ given that the fraction
of players in state 0 is  $\theta$   for any $\theta \in [0,1].$       This idea is described in
\cite{gomes2013continuous} (see Proposition 8) and is attributed there to P. Lions.
For simplicity, we derive the PDE for the avoid the crowd game and use the equations
derived in Section \ref{sec:difference_N}.   We use notation $Y$ instead of $U$ and $x$ instead of $\theta.$

Equations  \eqref{eq:Y_approx}-\eqref{eq:Y_approx_boundary}
suggest the following PDE, where now $n$ is treated as a continuous variable over $[0,N]$
rather than as an integer variable.
\begin{align}
-\frac{\partial Y}{\partial t}  &=  \delta f  -  \frac{\left|Y\right|  Y}{2} - 2\eta Y \nonumber  \\
& + \left[ (N-n)(Y_+  + \eta) -  n ( (-Y)_+ + \eta) \right] \frac{\partial Y}{\partial n}.   \label{eq:PDE_Y}  \\
&Y_T=\delta \psi  \label{eq:PDE_Y_boundary}
\end{align}

Note that if we let $(\hat n_t, 0 \leq t \leq T)$ be defined by the following initial value problem
\begin{align}   \label{eq:hat_n_IVP}
\dot {  \hat n} =    (N-\hat n)(Y(\hat n_t))_+    + \eta) -  \hat n ( (-Y(\hat n , t))_+ +  \eta)
\end{align}
then by the chain rule and the PDE \eqref{eq:PDE_Y},
\begin{align}  
-\frac{ Y(\hat n, t)}{d t}  & =  \delta f(\hat n)    -\frac{\partial Y}{\partial t}(\hat n, t)   -\frac{\partial Y}{\partial n}(\hat n, t)   \dot {  \hat n} \nonumber   \\
 &=  \delta f(\hat n)  -  \frac{\left|Y(\hat n, t)\right|  Y(\hat n, t)}{2} - 2\eta Y(\hat n, t) .  \label{eq:Yhatn_TVP}    \\
Y(\hat n_T, T) &=\delta \psi(\hat n_T)
\end{align}
Note that if we set $y_t =  Y(\hat n, t)$ and $x_t = \frac{2\hat n_t} N -1$ and consider
the join-the-crowd cost function (so  $f(\hat n)=\frac{2n-N} N$),    then
 \eqref{eq:hat_n_IVP} and \eqref{eq:Yhatn_TVP} are equivalent to the  MFG equations  \eqref{eq:follow_MFGxy}.
 This calculation is an instance of Proposition 8 of \cite{gomes2013continuous}.
  Figure 8 gives numerical evidence that $u(1,n,t)-u(0,n,t)$, with $n$ normalized to $\theta \in [0,1]$, converges
 as $n\to\infty.$   Presumably the limit is the solution of the PDE. 
 
 The PDE \eqref{eq:PDE_Y}- \eqref{eq:PDE_Y_boundary} is a first order hyperbolic type.
 Equation \eqref{eq:hat_n_IVP} defines a characteristic curve for the PDE, which is why the PDE along
the  curve reduces to an ODE.   The fact there are multiple MFG solutions indicates that solutions of the
PDE are also not unique.   The problem of identifying which MFG Nash  equilibria are FLMP trajectories
therefore can be extended to the problem of determining which solutions of the PDE are limits of the scaled
cost-to-go functions $((u(i, n, t)).$ 
\begin{center}
\includegraphics[scale=0.5]{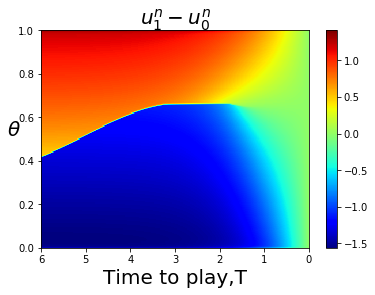}
\includegraphics[scale=0.4]{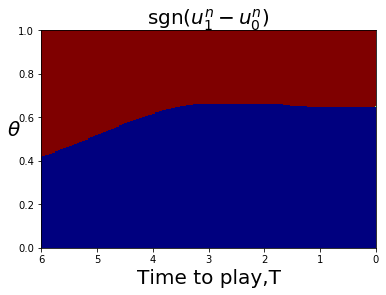}
\includegraphics[scale=0.3]{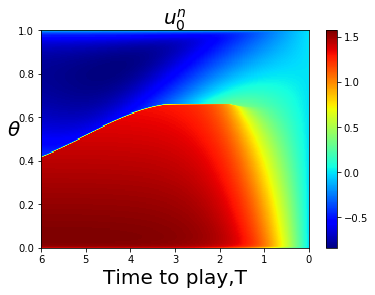}
\includegraphics[scale=0.3]{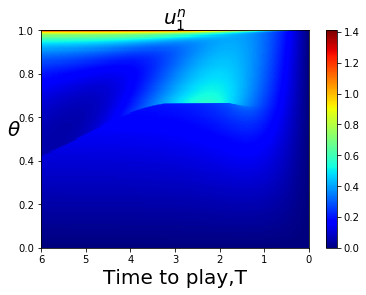}
\captionof{figure}{
Heat maps of cost to go functions for $N=400$ for an example with follow the crowd tendency with a prisoners' dilemma cost added in.
Running cost has $f(i, \theta)=|1-i-\theta|-0.6 \theta +0.3\ind_{i=0}$ and terminal cost is zero. 
State $0$ is the cooperative state and state $1$ is the greedy state. 
The join-the-crowd social pressure cost is given by $|1-i-\theta|$, the cooperative cost is given by $0.6 \theta$, and
the individual incentive cost is given by $0.3 \ind_{i=0}$.    All players would be better off if they moved to state 0, but if
$\theta$ is near $1/2$ then players have incentive to move to state 1.} \label{fig:PDE_follow_Dilemma}
\end{center}

\begin{center}
\includegraphics[scale=0.28]{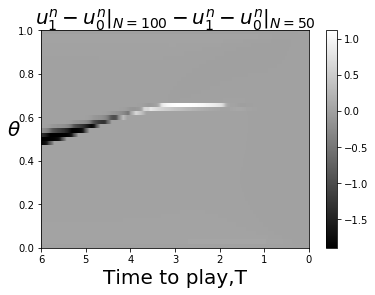}
\includegraphics[scale=0.28]{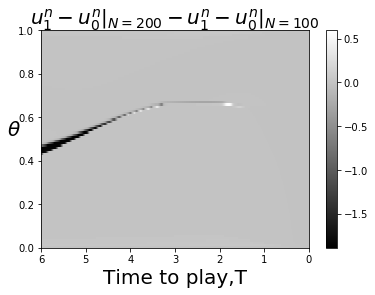} 
\includegraphics[scale=0.28]{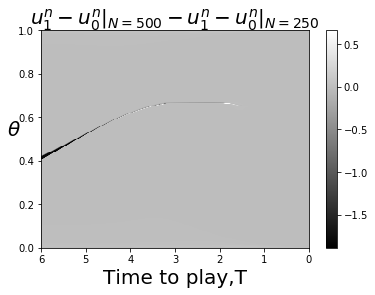}
\includegraphics[scale=0.28]{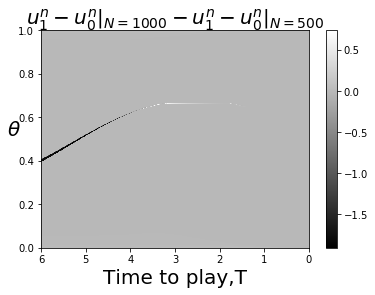}
\captionof{figure}{
Illustration of the pointwise convergence of the indifference set shown in Fig. \ref{fig:PDE_follow_Dilemma}
as $N$ increases.   Heatmaps of differences of  $u_1-u_0$  for different values of $N$ are shown, specifically, for
$N$ values: 100 vs. 50,  200 vs. 100, 500 vs. 250,  and 1000 vs. 500.} \label{fig:indiff_diff_dilemma}
\end{center}
}

\end{document}